\documentclass[12pt]{article}
\usepackage{amsfonts,amssymb,amsmath,amsthm}
\usepackage[english]{babel}
\date{}
\newcommand{\R}{{\mathbb R}}
\newcommand{\Z}{{\mathbb Z}}
\newcommand{\N}{{\mathbb N}}
\newcommand{\T}{{\mathbb T}}
\newcommand{\D}{{\mathcal D}}
\newcommand{\Cl}{\mathrm{Cl}\,}
\newcommand{\LL}{{\mathcal L}}
\newcommand{\const}{{\rm const}}
\newcommand{\essinf}{\mathop{\rm ess\,inf}}
\newcommand{\esssup}{\mathop{\rm ess\,sup}}
\newcommand{\esslim}{\mathop{\rm ess\,lim}}
\newcommand{\sign}{\mathop{\rm sign}}
\renewcommand{\div}{{\rm div}}
\newtheorem{theorem}{Theorem}
\newtheorem{lemma}{Lemma}

\newtheorem{corollary}{Corollary}

\theoremstyle{definition}
\newtheorem{definition}{Definition}
\newtheorem{remark}{Remark}

\def\Xint#1{\mathchoice
{\XXint\displaystyle\textstyle{#1}}%
{\XXint\textstyle\scriptstyle{#1}}%
{\XXint\scriptstyle\scriptscriptstyle{#1}}%
{\XXint\scriptscriptstyle\scriptscriptstyle{#1}}%
\!\int}
\def\XXint#1#2#3{{\setbox0=\hbox{$#1{#2#3}{\int}$ }
\vcenter{\hbox{$#2#3$ }}\kern-.57\wd0}}

\def\dashint{\Xint-}
\numberwithin{equation}{section}

\title{On almost periodic viscosity solutions to Hamilton-Jacobi equations}
\author{Evgeny Yu. Panov}
\sloppy
\begin{document}
\maketitle
\begin{abstract}
We establish that a viscosity solution to a multidimensional Hamilton-Jacobi equation with Bohr almost periodic initial data remains to be spatially almost periodic and the additive subgroup generated by its spectrum does not increase in time. In the case of one space variable and a non-degenerate hamiltonian we prove the decay property of almost periodic viscosity solutions when time $t\to+\infty$. For periodic solutions the more general result is proved on unconditional asymptotic convergence of a viscosity solution to a traveling wave.
\end{abstract}

\section{Introduction}
In the half-space $\Pi=\R_+\times\R^n$, $\R_+=(0,+\infty)$, we consider the Cauchy problem for a
first order Hamilton-Jacobi equation
\begin{equation}\label{1}
u_t+H(\nabla_x u)=0
\end{equation}
with merely continuous hamiltonian function $H(p)\in C(\R^n)$, and with initial condition
\begin{equation}\label{2}
u(0,x)=u_0(x)\in BUC(\R^n),
\end{equation}
where $BUC(\R^n)$ denotes the Banach space of bounded uniformly continuous functions on $\R^n$
equipped with the uniform norm $\|u\|_\infty=\esssup|u(x)|$. We recall the notions of
superdifferential $D^+u$ and subdifferential $D^-u$ of a continuous function $u(t,x)\in C(\Pi)$:
\begin{eqnarray*}
D^+u(t_0,x_0)=\{ \ \nabla \varphi(t_0,x_0) \ | \ \varphi(t,x)\in C^1(\Pi), \\ (t_0,x_0) \mbox{ is a point of local maximum of } u-\varphi \ \}, \\
D^-u(t_0,x_0)=\{ \ \nabla \varphi(t_0,x_0) \ | \ \varphi(t,x)\in C^1(\Pi), \\ (t_0,x_0) \mbox{ is a point of local minimum of } u-\varphi \ \}.
\end{eqnarray*}
Let us denote by $BUC_{loc}(\bar\Pi)$ the space of continuous functions on $\bar\Pi=\Cl\Pi=[0,+\infty)\times\R^n$, which are bounded and uniformly continuous in any layer $[0,T)\times\R^n$, $T>0$.

\begin{definition}[see \cite{CL,CEL}]\label{def1}

A function $u(t,x)\in BUC_{loc}(\bar\Pi)$ is called a viscosity subsolution (v.subs. for short) of problem (\ref{1}), (\ref{2}) if
$u(0,x)\le u_0(x)$ and $s+H(p)\le 0$ for all $(s,p)\in D^+u(t,x)$, $(t,x)\in\Pi$.

A function  $u(t,x)\in BUC_{loc}(\bar\Pi)$ is called a viscosity supersolution (v.supers.) of problem (\ref{1}), (\ref{2}) if
$u(0,x)\ge u_0(x)$ and $s+H(p)\ge 0$ for all $(s,p)\in D^-u(t,x)$, $(t,x)\in\Pi$.

Finally, $u(t,x)\in BUC_{loc}(\bar\Pi)$ is called a viscosity solution (v.s.) of (\ref{1}), (\ref{2}) if it is a v.subs. and a v.supers.
of this problem simultaneously.
\end{definition}
The theory of viscosity solutions was developed in \cite{CL,CEL} for general multidimensional Hamilton-Jacobi equation,
this theory extended the earlier results of S.N. Kruzhkov \cite{KrHJ,KrHJ1}.

It is known that for each $u_0(x)\in BUC(\R^n)$ there exists a unique v.s. of problem (\ref{1}), (\ref{2}). The uniqueness readily follows from the more general comparison principle.

\begin{theorem}\label{th1}
Let $u_1(t,x), u_2(t,x)\in  BUC_{loc}(\bar\Pi)$ be a v.subs. and a v.supers. of (\ref{1}), (\ref{2}) with initial data $u_{10}(x), u_{20}(x)$, respectively. Assume that $u_{10}(x)\le u_{20}(x)$ $\forall x\in\R^n$.
Then $u_1(t,x)\le u_2(t,x)$ $\forall (t,x)\in\Pi$.
\end{theorem}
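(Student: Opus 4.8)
The plan is to use the classical doubling-of-variables technique of Crandall–Lions. Suppose for contradiction that $\sigma := \sup_{\Pi} (u_1 - u_2) > 0$. For parameters $\varepsilon, \alpha, \beta > 0$ (to be sent to zero in a suitable order) I would introduce the auxiliary function
\[
\Phi(t,x,s,y) = u_1(t,x) - u_2(s,y) - \frac{|x-y|^2}{2\varepsilon} - \frac{(t-s)^2}{2\varepsilon} - \alpha(|x|^2 + |y|^2) - \beta(t+s)
\]
on $\bar\Pi \times \bar\Pi$. The quadratic penalty $|x-y|^2/2\varepsilon$ (resp. $(t-s)^2/2\varepsilon$) forces near-diagonal behavior in space (resp. time); the term $\alpha(|x|^2+|y|^2)$ guarantees that $\Phi$ attains its supremum at some point $(\bar t, \bar x, \bar s, \bar y)$ even though $\R^n$ is unbounded — here one uses that $u_1, u_2$ are bounded on each layer $[0,T)\times\R^n$; and the term $\beta(t+s)$ will ultimately produce the strict sign needed to reach a contradiction. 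A standard penalization lemma then gives that, along the maximizing points, $|\bar x - \bar y|^2/\varepsilon \to 0$, $|\bar t - \bar s|^2/\varepsilon \to 0$, $\alpha(|\bar x|^2 + |\bar y|^2) \to 0$ as $\varepsilon, \alpha \to 0$ (for fixed small $\beta$), and that $\Phi$ at the maximum stays bounded below by a quantity close to $\sigma - 2\beta T$, hence positive for $\beta$ small.

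Next I would show the maximum is attained at an interior-in-time point, i.e. $\bar t > 0$ and $\bar s > 0$, for $\varepsilon, \alpha$ small. If, say, $\bar t = 0$, then using $u_1(0,\cdot) \le u_{10} \le u_{20} \le u_2(0,\cdot)$ together with uniform continuity one estimates $\Phi \le u_1(0,\bar x) - u_2(\bar s, \bar y) - \cdots \le$ (something tending to $0$), contradicting positivity; similarly for $\bar s = 0$. Once interiority is established, freezing $(\bar s, \bar y)$ shows $(t,x) \mapsto u_1(t,x) - \varphi_1(t,x)$ has a local max at $(\bar t, \bar x)$ with $\varphi_1$ the smooth function $\frac{|x-\bar y|^2}{2\varepsilon} + \frac{(t-\bar s)^2}{2\varepsilon} + \alpha|x|^2 + \beta t$, so $(\partial_t \varphi_1, \nabla_x \varphi_1)(\bar t,\bar x) \in D^+ u_1(\bar t, \bar x)$; the v.subs. inequality yields
\[
\frac{\bar t - \bar s}{\varepsilon} + \beta + H\!\left(\frac{\bar x - \bar y}{\varepsilon} + 2\alpha\bar x\right) \le 0.
\]
Symmetrically, freezing $(\bar t, \bar x)$ and using the v.supers. property of $u_2$ at $(\bar s, \bar y)$ gives
\[
\frac{\bar t - \bar s}{\varepsilon} - \beta + H\!\left(\frac{\bar x - \bar y}{\varepsilon} - 2\alpha\bar y\right) \ge 0.
\]

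Subtracting, the troublesome $(\bar t - \bar s)/\varepsilon$ terms cancel and one obtains
\[
2\beta \le H\!\left(\frac{\bar x - \bar y}{\varepsilon} - 2\alpha\bar y\right) - H\!\left(\frac{\bar x - \bar y}{\varepsilon} + 2\alpha\bar x\right).
\]
The main obstacle is controlling the right-hand side, since $H$ is merely continuous and $(\bar x - \bar y)/\varepsilon$ need not stay bounded as $\varepsilon \to 0$. The standard remedy: first fix $\varepsilon$ and let $\alpha \to 0$; a subsequence of maximizers converges, $\alpha|\bar x|^2 + \alpha|\bar y|^2 \to 0$ forces $\alpha\bar x, \alpha\bar y \to 0$, and meanwhile $(\bar x-\bar y)/\varepsilon$ stays in a bounded set (bounded, for fixed $\varepsilon$, by the oscillation of $u_1, u_2$), so the two arguments of $H$ converge to the same limit and, by continuity of $H$ on the relevant compact set, the right-hand side tends to $0$ — contradicting $2\beta > 0$. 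Thus $\sigma \le 0$, which is the claim. I would also remark that the argument needs only local uniform continuity in each layer (as provided by $BUC_{loc}(\bar\Pi)$) together with boundedness, and that no regularity or growth assumption on $H$ beyond continuity is used, matching the hypotheses of the theorem.
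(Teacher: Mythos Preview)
The paper does not actually prove Theorem~\ref{th1}: immediately after the statement it says ``For the proof of Theorem~\ref{th1}, we refer to \cite{CEL}.'' Your doubling-of-variables argument is precisely the Crandall--Evans--Lions proof from \cite{CEL} that the paper is citing, so in substance your approach and the paper's (outsourced) proof coincide.

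One small technical point worth tightening: you set $\sigma=\sup_\Pi(u_1-u_2)$ and work on all of $\bar\Pi\times\bar\Pi$, but since $u_1,u_2$ are only assumed to lie in $BUC_{loc}(\bar\Pi)$ (bounded and uniformly continuous on each layer $[0,T)\times\R^n$), $\sigma$ could a priori be $+\infty$ and the maximum of $\Phi$ need not be attained in the unbounded time direction. The clean fix, which is implicit in your reference to ``$\sigma-2\beta T$'', is to restrict from the outset to a fixed slab $[0,T]\times\R^n$, prove $u_1\le u_2$ there, and then let $T$ be arbitrary. With that adjustment your sketch is correct and faithful to \cite{CEL}.
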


For the proof of Theorem~\ref{th1}, we refer to \cite{CEL}.

\begin{corollary}\label{cor1}
Let $u_1(t,x), u_2(t,x)\in  BUC_{loc}(\bar\Pi)$ be v.s. of (\ref{1}), (\ref{2}) with initial data $u_{10}(x), u_{20}(x)$, respectively. Then for all $t>0$
$$
\essinf (u_{10}(x)-u_{20}(x))\le u_1(t,x)-u_2(t,x)\le \esssup (u_{10}(x)-u_{20}(x)).
$$
In particular, $\|u_1-u_2\|_\infty\le\|u_{10}-u_{20}\|_\infty$.

\end{corollary}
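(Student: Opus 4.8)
The plan is to reduce the statement to the comparison principle of Theorem~\ref{th1} using the elementary fact that equation~(\ref{1}) is invariant under the addition of constants to solutions. First I would record this invariance precisely: if $u(t,x)\in BUC_{loc}(\bar\Pi)$ is a v.s. (resp. v.subs., v.supers.) of (\ref{1}), (\ref{2}) with initial datum $u_0$, then for every $c\in\R$ the function $u(t,x)+c$ is again a v.s. (resp. v.subs., v.supers.) of (\ref{1}) with initial datum $u_0+c$. This follows directly from Definition~\ref{def1}: the Hamiltonian $H$ depends only on $\nabla_x u$ and not on $u$ itself, while $D^\pm(u+c)=D^\pm u$ because a function $\varphi\in C^1(\Pi)$ realizes a local extremum of $u-\varphi$ at a point exactly when $\varphi+c$ realizes the same extremum of $(u+c)-\varphi$, with identical gradient there.

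Next, set $c_+=\esssup(u_{10}(x)-u_{20}(x))$ and $c_-=\essinf(u_{10}(x)-u_{20}(x))$. These are finite since $u_{10},u_{20}\in BUC(\R^n)$ are bounded; moreover, both functions being continuous, the essential extrema coincide with the usual ones, so $u_{20}(x)+c_-\le u_{10}(x)\le u_{20}(x)+c_+$ for all $x\in\R^n$. For the upper estimate I would apply Theorem~\ref{th1} to the pair consisting of $u_1$ (a v.subs. with datum $u_{10}$) and $u_2+c_+$ (a v.supers. with datum $u_{20}+c_+\ge u_{10}$), which yields $u_1(t,x)\le u_2(t,x)+c_+$ on $\Pi$. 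For the lower estimate I would apply Theorem~\ref{th1} to $u_2+c_-$ (a v.subs. with datum $u_{20}+c_-\le u_{10}$) and $u_1$ (a v.supers. with datum $u_{10}$), which yields $u_2(t,x)+c_-\le u_1(t,x)$ on $\Pi$. Combining the two inequalities gives the asserted two-sided bound for every $t>0$.

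Finally, the uniform-norm estimate follows since for each $t>0$ the pointwise bounds $c_-\le u_1(t,x)-u_2(t,x)\le c_+$ give
$$
\|u_1(t,\cdot)-u_2(t,\cdot)\|_\infty\le\max\{|c_+|,|c_-|\}=\esssup|u_{10}(x)-u_{20}(x)|=\|u_{10}-u_{20}\|_\infty ,
$$
where I used $\esssup|f|=\max\{|\esssup f|,\,|\essinf f|\}$. I do not expect a genuine obstacle in this argument; the only point deserving a line of verification is the constant-shift invariance of the viscosity notions, which is immediate from the definitions. (Alternatively one could quote the order-preserving and translation properties of the Hopf--Lax/viscosity semigroup, but reducing directly to Theorem~\ref{th1} keeps the proof self-contained.)
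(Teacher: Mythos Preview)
Your proof is correct and follows essentially the same route as the paper: shift $u_2$ by the constants $c_-$ and $c_+$, observe that the shifted functions remain viscosity solutions with shifted initial data, and then apply Theorem~\ref{th1} twice. The paper's argument is the same two-line reduction, only stated slightly more tersely.
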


\begin{proof}
We denote
$$
a=\essinf (u_{10}(x)-u_{20}(x)), \quad b=\esssup (u_{10}(x)-u_{20}(x)),
$$
and observe that the functions $a+u_2(t,x)$, $b+u_2(t,x)$ a v.s. of (\ref{1}), (\ref{2}) with initial data $a+u_{20}(x)$, $b+u_{20}(x)$, respectively. Since $a+u_{20}(x)\le u_{10}(x)\le b+u_{20}(x)$, then by Theorem~\ref{1}
$a+u_2(t,x)\le u_1(t,x)\le b+u_2(t,x)$ $\forall (t,x)\in\Pi$, which completes the proof.
\end{proof}

\begin{lemma}\label{lem1}
Let $H(p,q)\in C(\R^n\times\R^m)$. We consider the equation
\begin{equation}\label{a1}
u_t+H(\nabla_x u,\nabla_y u)=0
\end{equation}
in the half-space $\{ \ (t,x,y) \ | \ t>0, x\in\R^n, y\in\R^m \ \}$. Then $u(t,y)$ is a non-depending on $x$ v.s. of (\ref{a1}) if and only if $u(t,y)$ is a v.s. of the reduced equation
$$
u_t+H(0,\nabla_y u)=0, \quad (t,y)\in\in\R_+\times\R^m.
$$
\end{lemma}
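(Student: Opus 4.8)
The plan is to prove both implications by working directly with the definition of viscosity sub/supersolution, exploiting the fact that a function independent of $x$ has a very constrained super- and subdifferential. I will write $z=(x,y)$ for points in $\R^n\times\R^m$ and $(s,p,q)$ for elements of the corresponding cotangent directions, where $s$ is dual to $t$, $p$ to $x$, and $q$ to $y$.

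First I would treat the easy direction: suppose $u(t,y)$ is a v.s. of the reduced equation $u_t+H(0,\nabla_y u)=0$ on $\R_+\times\R^m$, and regard it as a function $\tilde u(t,x,y)=u(t,y)$ on the larger half-space. To check $\tilde u$ is a v.subs. of (\ref{a1}), take any test function $\varphi(t,x,y)\in C^1$ and any point $(t_0,x_0,y_0)$ of local maximum of $\tilde u-\varphi$. Since $\tilde u$ does not depend on $x$, the map $x\mapsto -\varphi(t_0,x,y_0)$ has a local maximum at $x_0$, hence $\nabla_x\varphi(t_0,x_0,y_0)=0$; moreover $(t,y)\mapsto u(t,y)-\varphi(t,x_0,y)$ has a local maximum at $(t_0,y_0)$, so $(\partial_t\varphi(t_0,x_0,y_0),\nabla_y\varphi(t_0,x_0,y_0))\in D^+u(t_0,y_0)$. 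By the subsolution property of $u$ for the reduced equation, $\partial_t\varphi+H(0,\nabla_y\varphi)\le 0$ at that point, which is exactly $s+H(p,q)\le 0$ since $p=\nabla_x\varphi=0$. The supersolution half and the initial condition are identical word for word with $\le$ replaced by $\ge$, so this direction is complete.

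For the converse, suppose $u(t,y)$ (again viewed as independent of $x$) is a v.s. of (\ref{a1}); I want to show $u$ is a v.s. of the reduced equation. Given a test function $\psi(t,y)\in C^1(\R_+\times\R^m)$ with $(t_0,y_0)$ a local maximum of $u-\psi$, I simply set $\varphi(t,x,y)=\psi(t,y)$ — a legitimate $C^1$ function on the big half-space — and observe $(t_0,x_0,y_0)$ is a local maximum of $\tilde u-\varphi$ for any $x_0$. Then $(\partial_t\psi,0,\nabla_y\psi)=(\partial_t\varphi,\nabla_x\varphi,\nabla_y\varphi)\in D^+\tilde u(t_0,x_0,y_0)$, so the subsolution property of $\tilde u$ gives $\partial_t\psi+H(0,\nabla_y\psi)\le 0$ at $(t_0,y_0)$. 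Supersolution and initial data go the same way, so $u$ solves the reduced equation. I would also remark at the outset that a viscosity solution independent of $x$ is automatically in the right regularity class $BUC_{loc}$ on the smaller domain, which is immediate.

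The main (and really only) subtlety is the first direction, specifically the step ``$\nabla_x\varphi(t_0,x_0,y_0)=0$''. This needs $(t_0,x_0,y_0)$ to be an \emph{interior} point in the $x$-variable so that a local max forces the $x$-gradient to vanish — which is fine since the whole domain is open in $x$ — and it needs the observation that restricting a test function to a slice $\{x=x_0\}$ or to a slice $\{(t,y)=(t_0,y_0)\}$ preserves being $C^1$ and preserves the local extremum property. Both are routine, but it is worth stating them carefully because the equivalence hinges precisely on the fact that on the reduced equation we only ever see the value $H(0,\cdot)$, matched against the constraint $p=0$ coming from $x$-independence. One should double-check that local maxima/minima are genuinely local (the definition uses local, not global, extrema), so no global growth control on $\varphi$ is needed; this is consistent with Definition~\ref{def1}.
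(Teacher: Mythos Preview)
Your proof is correct and follows essentially the same approach as the paper. The paper compresses the entire argument into the single identity
\[
D^\pm u(t,x,y)=\{ (s,0,q)\in \R\times\R^n\times\R^m \mid (s,q)\in D^\pm u(t,y) \},
\]
whereas you unpack both inclusions of this identity explicitly via test functions; your ``$\nabla_x\varphi(t_0,x_0,y_0)=0$'' step is precisely the forward inclusion, and your choice $\varphi(t,x,y)=\psi(t,y)$ is the reverse one.
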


\begin{proof}
The assertion of lemma readily follows from the evident equalities
$$D^\pm u(t,x,y)=\{ (s,0,q)\in \R\times\R^n\times\R^m \ | \ (s,q)\in D^\pm u(t,y) \ \}.$$
\end{proof}

\begin{lemma}\label{lem2}
Let $y=Ax$, $\displaystyle y_i=\sum_{j=1}^n a_{ij}x_j$, $i=1,\ldots,n$, be a non-degenerate linear operator on $\R^n$, $v_0(y)\in BUC(\R^n)$, $v(t,y)\in BUC_{loc}(\Pi)$. Then the function $u(t,x)=v(t,Ax)$ is a v.s. of (\ref{1}) with initial data $u_0(x)=v_0(Ax)$ if and only if $v(t,y)$ is a v.s. of the problem
$$
v_t+H(A^*\nabla_y v)=0, \quad v(0,y)=v_0(y),
$$
where $A^*$ is a conjugate operator (so that $\displaystyle (A^*\nabla_y v)_j=\sum_{i=1}^n a_{ij}\partial_{y_i} v$, $j=1,\ldots,n$~).
\end{lemma}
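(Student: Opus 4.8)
The plan is to deduce the equivalence exactly as in Lemma~\ref{lem1}: from the way sub- and superdifferentials transform under the linear change of variables. Put $\Phi(t,x)=(t,Ax)$. Since $A$ is non-degenerate, $\Phi$ is a $C^1$-diffeomorphism of $\bar\Pi$ onto itself with inverse $\Phi^{-1}(t,y)=(t,A^{-1}y)$; both $\Phi$ and $\Phi^{-1}$ are (globally Lipschitz) affine maps, so composition with them preserves the class $BUC_{loc}(\bar\Pi)$. In particular $u=v\circ\Phi\in BUC_{loc}(\bar\Pi)$ iff $v\in BUC_{loc}(\bar\Pi)$, and $u_0(x)=v_0(Ax)$ defines an element of $BUC(\R^n)$ iff $v_0\in BUC(\R^n)$, so the standing regularity requirements of Definition~\ref{def1} match up on both sides.

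First I would record the transformation rule for the differentials. If $\psi\in C^1(\Pi)$ and $(t_0,x_0)$ is a point of local extremum of $u-\psi$, then $\tilde\varphi:=\psi\circ\Phi^{-1}\in C^1(\Pi)$ and, since $\Phi^{-1}$ is a homeomorphism carrying $(t_0,x_0)$ to $(t_0,y_0):=(t_0,Ax_0)$, the point $(t_0,y_0)$ is a local extremum of the same type of $v-\tilde\varphi=(u-\psi)\circ\Phi^{-1}$. By the chain rule $\partial_t\tilde\varphi(t_0,y_0)=\partial_t\psi(t_0,x_0)$ and $\nabla_y\tilde\varphi(t_0,y_0)=(A^{-1})^*\nabla_x\psi(t_0,x_0)=(A^*)^{-1}\nabla_x\psi(t_0,x_0)$, i.e. $\nabla_x\psi(t_0,x_0)=A^*\nabla_y\tilde\varphi(t_0,y_0)$. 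Running the same computation with $\Phi$ in place of $\Phi^{-1}$ gives the reverse inclusion, so that
$$
D^\pm u(t,x)=\{\,(s,A^*q)\ \mid\ (s,q)\in D^\pm v(t,Ax)\,\},\qquad (t,x)\in\Pi .
$$

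With this identity the rest is bookkeeping. By Definition~\ref{def1}, $u$ is a v.subs. of (\ref{1}) with initial data $u_0$ iff $u(0,x)\le v_0(Ax)$ for all $x$ and $s+H(p)\le 0$ for every $(s,p)\in D^+u(t,x)$, $(t,x)\in\Pi$. Substituting $p=A^*q$ via the displayed formula, the differential inequality becomes $s+H(A^*q)\le 0$ for every $(s,q)\in D^+v(t,Ax)$; since $x\mapsto Ax$ is a bijection of $\R^n$, this is the same as demanding $s+H(A^*q)\le 0$ for every $(s,q)\in D^+v(t,y)$, $(t,y)\in\Pi$, which is precisely the subsolution inequality for $v_t+H(A^*\nabla_y v)=0$. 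Likewise $u(0,x)\le v_0(Ax)$ $\forall x$ is equivalent to $v(0,y)\le v_0(y)$ $\forall y$ by surjectivity. Repeating the argument with $D^-$ and reversed inequalities handles supersolutions, and a v.s. is a v.subs. together with a v.supers., which gives the claim.

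I do not expect any genuine analytic obstacle here; the only points needing care are the correct placement of the adjoint $A^*$ in the chain rule (it appears because $v$ is recovered from $u$ by the \emph{inverse} substitution $x=A^{-1}y$), and the remark that it is exactly the surjectivity of $x\mapsto Ax$ that allows one to pass from "for all $(t,x)\in\Pi$" to "for all $(t,y)\in\Pi$" in the pointwise conditions of Definition~\ref{def1}.
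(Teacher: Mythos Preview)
Your proof is correct and follows essentially the same route as the paper: both arguments transport test functions through the diffeomorphism $(t,x)\mapsto(t,Ax)$ and appeal to the chain rule identity $\nabla_x[\psi(t,Ax)]=A^*(\nabla_y\psi)(t,Ax)$, which is exactly the computation underlying your displayed formula for $D^\pm u$. The paper merely states this observation in one sentence, whereas you spell out the resulting bijection between $D^\pm u(t,x)$ and $D^\pm v(t,Ax)$ and then read off Definition~\ref{def1}; the content is the same.
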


\begin{proof}
The statement of Lemma~\ref{lem2} follows from the fact that $(t_0,x_0)$ is a point of local maximum (minimum) of $u(t,x)-\psi(t,Ax)$, with $\psi(t,y)\in C^1(\Pi)$,
if and only if $(t_0,Ax_0)$ is a point of local maximum (minimum) of $v(t,y)-\psi(t,y)$ and from the classical identity $A^*\nabla_y \psi(t,y)=\nabla_x \psi(t,Ax)$, $y=Ax$.
\end{proof}

In the half space $\R_+\times\R^n\times\R^m$ we consider the Cauchy problem
for equation
\begin{equation}\label{l1}
u_t+H(\nabla_x u)=0, \quad u=u(t,x,y), \ (t,x,y)\in \R_+\times\R^n\times\R^m,
\end{equation}
with initial condition
\begin{equation}\label{l2}
u(0,x,y)=u_0(x,y)\in BUC(\R^n\times\R^m).
\end{equation}
\begin{lemma}\label{lem3}
A function $u(t,x,y)\in BUC_{loc}(\R_+\times\R^n\times\R^m)$ is a v.s. of (\ref{l1}),
(\ref{l2}) if and only if for all fixed $y\in\R^m$ the functions $u^y(t,x)=u(t,x,y)$
is a v.s. of (\ref{1}), (\ref{2}) with initial data $u_0^y(x)=u_0(x,y)$.
\end{lemma}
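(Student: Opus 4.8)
The plan is to prove the two implications separately. For the ``if'' direction --- each slice $u^y(t,x)=u(t,x,y)$ being a v.s. of (\ref{1}), (\ref{2}) implies that $u$ is a v.s. of (\ref{l1}), (\ref{l2}) --- I would restrict test functions to the slice $\{y=\const\}$. Given $(s,p,q)\in D^+u(t_0,x_0,y_0)$ realized by some $\varphi\in C^1$, the function $\psi(t,x):=\varphi(t,x,y_0)$ lies in $C^1(\R_+\times\R^n)$, $u^{y_0}-\psi$ has a local maximum at $(t_0,x_0)$ (restrict the defining inequality to $y=y_0$), and $(s,p)=(\partial_t\psi,\nabla_x\psi)(t_0,x_0)\in D^+u^{y_0}(t_0,x_0)$; hence $s+H(p)\le0$ because $u^{y_0}$ is a v.subs. of (\ref{1}). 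The v.supers. inequality follows identically with local minima, and the condition $u(0,x,y)=u_0(x,y)$ is just the initial conditions of the slices read off for every $y$. So this direction is essentially immediate, mirroring the trivial equalities used in Lemmas~\ref{lem1}--\ref{lem3} above.

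The substance of the lemma is the converse: if $u$ is a v.s. of (\ref{l1}), (\ref{l2}), then every slice $u^{y_0}$ is a v.s. of (\ref{1}), (\ref{2}), and here a test function $\psi\in C^1(\R_+\times\R^n)$ for the slice must be \emph{lifted} to a test function on the full space. I would use a penalization in the $y$-variable. Fix $y_0$ and let $u^{y_0}-\psi$ have a local maximum at $(t_0,x_0)$; after adding $|t-t_0|^2+|x-x_0|^2$ to $\psi$ (which changes neither $\nabla\psi(t_0,x_0)$ nor the conclusion) we may assume it is strict on a closed ball $\bar B\subset\R_+\times\R^n$ centered at $(t_0,x_0)$. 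Fix $\rho>0$ and, for $\delta>0$, pick a maximizer $(t_\delta,x_\delta,y_\delta)$ of
$$
\Phi_\delta(t,x,y)=u(t,x,y)-\psi(t,x)-\frac1\delta|y-y_0|^2
$$
over the compact set $\bar B\times\{|y-y_0|\le\rho\}$. The standard penalization estimates --- boundedness of $u$ gives $\frac1\delta|y_\delta-y_0|^2\le\const$, whence $y_\delta\to y_0$, and strictness of the slice maximum then forces $(t_\delta,x_\delta)\to(t_0,x_0)$ and $\frac1\delta|y_\delta-y_0|^2\to0$ as $\delta\to0+$ --- put the maximizer in the interior of $\bar B\times\{|y-y_0|\le\rho\}$ for small $\delta$, so it is an interior local maximum of $u-\varphi_\delta$, $\varphi_\delta(t,x,y):=\psi(t,x)+\frac1\delta|y-y_0|^2\in C^1$, and therefore $\bigl(\partial_t\psi(t_\delta,x_\delta),\nabla_x\psi(t_\delta,x_\delta),\frac2\delta(y_\delta-y_0)\bigr)=\nabla\varphi_\delta(t_\delta,x_\delta,y_\delta)\in D^+u(t_\delta,x_\delta,y_\delta)$. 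Since (\ref{l1}) involves only $\nabla_x u$, the subsolution inequality reads $\partial_t\psi(t_\delta,x_\delta)+H(\nabla_x\psi(t_\delta,x_\delta))\le0$, and letting $\delta\to0+$, by continuity of $H$ and of $\nabla\psi$, $\partial_t\psi(t_0,x_0)+H(\nabla_x\psi(t_0,x_0))\le0$. The symmetric argument (maxima replaced by minima, $D^+u$ by $D^-u$, $-\frac1\delta|y-y_0|^2$ by $+\frac1\delta|y-y_0|^2$) gives the supersolution inequality, while the initial condition is $u^{y_0}(0,x)=u(0,x,y_0)=u_0(x,y_0)=u_0^{y_0}(x)$; hence $u^{y_0}$ is a v.s. of (\ref{1}), (\ref{2}).

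The hard part is exactly this lifting. The penalty $\frac1\delta|y-y_0|^2$ produces a $y$-gradient of order $\delta^{-1/2}$ at the maximizer, which would be fatal for a hamiltonian genuinely depending on $\nabla_y u$; it does no harm here solely because $H$ in (\ref{l1}) ignores that slot, and this is the one place where the special structure of (\ref{l1}) is used. A slicker-looking alternative would be to set $w(t,x,y):=w^y(t,x)$ with $w^y$ the v.s. of (\ref{1}), (\ref{2}) with data $u_0^y$, to note via the comparison estimate of Corollary~\ref{cor1} that $y\mapsto w^y$ is uniformly continuous so $w\in BUC_{loc}$, to deduce from the ``if'' direction that $w$ is a v.s. of (\ref{l1}), (\ref{l2}), and to invoke uniqueness to get $w=u$; but checking that the modulus of continuity of $w^y$ in $(t,x)$ is uniform in $y$ is no cheaper than the penalization above, so I would present the direct argument.
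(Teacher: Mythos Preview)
Your proof is correct and follows essentially the same route as the paper: the easy direction is handled by restricting the full-space test function to the slice $y=y_0$, and the hard direction is handled by penalizing with $\frac{1}{\delta}|y-y_0|^2$ (the paper writes $k|y-y_0|^2$ with $k\to\infty$, which is the same), extracting a maximizer on a compact set, showing it converges to $(t_0,x_0,y_0)$ via the strictness of the slice maximum, and passing to the limit in the subsolution inequality. Your remark that the possibly large $y$-gradient at the maximizer is harmless precisely because $H$ in (\ref{l1}) ignores $\nabla_y u$ is exactly the point.
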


\begin{proof}
Let $u(t,x,y)$ be a v.s. of (\ref{l1}), (\ref{l2}), and $y_0\in\R^m$. We assume that $\varphi(t,x)\in C^1(\Pi)$ and
$(t_0,x_0)\in\Pi$ is a point of local maximum of $u^{y_0}-\varphi$. Moreover, replacing $\varphi$ by $\varphi(t,x)+(t-t_0)^2+|x-x_0|^2+u(t_0,x_0,y_0)-\varphi(t_0,x_0)$ (here and in the sequel we denote by $|z|$ the Euclidean norm of finite-dimensional vector $z$~), we can suppose, without loss of generality, that $(t_0,x_0)\in\Pi$ is a point of strict local maximum of $u^{y_0}-\varphi$, and that in this point
$u^{y_0}(t_0,x_0)-\varphi(t_0,x_0)=0$.  Therefore, there exists $c>0$ such that
$$
\varphi(t,x)-u(t,x,y_0)>c \quad \forall (t,x)\in\Pi, \ (t-t_0)^2+|x-x_0|^2=r^2,
$$
where $r\in (0,t_0)$ is sufficiently small.
By the continuity there exists $h>0$ such that $\varphi(t,x)-u(t,x,y)>c/2$ for all $(t,x,y)\in\R_+\times\R^n\times\R^m$, $(t-t_0)^2+|x-x_0|^2=r^2$, $|y-y_0|\le h$. We can choose such $C_0>0$ that
$$
C_0h^2-c>\max\{ \ u(t,x,y)-\varphi(t,x) \ | \ (t-t_0)^2+|x-x_0|^2\le r^2, \ |y-y_0|=h \ \}.
$$
Then for each natural $k>C_0$ the function $p_k(t,x,y)=\varphi(t,x)+k|y-y_0|^2\in C^1(\R_+\times\R^n\times\R^m)$ and satisfies the property
\begin{equation}\label{l3}
p_k(t,x,y)-u(t,x,y)>c/2>0=p_k(y_0,x_0,y_0)-u(t_0,x_0,y_0)
\end{equation}
$\forall (t,x,y)\in \partial V_{r,h}$, where we denote by $V_{rh}$ the domain
$$
V_{rh}=\{ \ (t,x,y)\in \R_+\times\R^n\times\R^m \ | \ (t-t_0)^2+|x-x_0|^2<r^2, \ |y-y_0|<h \ \}.
$$
In view of (\ref{l3}) the point $(t_k,x_k,y_k)$ such that
$$
p_k(t_k,x_k,y_k)-u(t_k,x_k,y_k)=\min\limits_{(t,x,y)\in \Cl V_{rh}} (p_k(t,x,y)-u(t,x,y))
$$
lies in $V_{rh}$ and, therefore, it is a point of local maximum of the difference $u(t,x,y)-p_k(t,x,y)$.
Since $\nabla p_k(t,x,y)=(\partial_t\varphi(t,x),\nabla_x\varphi(t,x),2k(y-y_0))$, then by the definition of v.s. of
(\ref{l1})
\begin{equation}\label{l4}
\partial_t\varphi(t_k,x_k)+H(\nabla_x\varphi(t_k,x_k))\le 0.
\end{equation}
Since $\min\limits_{(t,x,y)\in \Cl V_{rh}} (p_k(t,x,y)-u(t,x,y))\le p_k(t_0,x_0,y_0)-u(t_0,x_0,y_0)=0$, then  $k|y_k-y_0|^2\le m=\max\limits_{(t,x,y)\in \Cl V_{rh}} (u(t,x,y)-\varphi(t,x))$. In particular $y_k\to y_0$ as $k\to\infty$. Taking into account that $(t_0,x_0)$ is a point of strict local minimum of $u(t,x,y_0)-\varphi(t,x)$,
we derive that $(t_k,x_k)\to (t_0,x_0)$ as $k\to\infty$. Therefore, it follows from (\ref{l4}) in the limit as $k\to\infty$ that
$$
\partial_t\varphi(t_0,x_0)+H(\nabla_x\varphi(t_0,x_0))\le 0.
$$
This means that $u(t,x,y_0)$ is a v.subs. of (\ref{1}).
By the similar reasons we obtain that
$$
\partial_t\varphi(t_0,x_0)+H(\nabla_x\varphi(t_0,x_0))\ge 0
$$
whenever $(t_0,x_0)$ is a point of strict local minimum of $u(t,x,y_0)-\varphi(t,x)$, where $\varphi(t,x)\in C^1(\Pi)$, that is $u(t,x,y_0)$ is a v.supers. of (\ref{1}). Thus, $u(t,x,y_0)$ is a v.s. of (\ref{1}) for each $y_0\in\R^m$.

Conversely, assume that $u^y(t,x)$ is a v.s. of (\ref{1}) for every $y\in\R^m$. Suppose $\varphi(t,x,y)\in C^1(\R_+\times\R^n\times\R^m)$ and that $(t_0,x_0,y_0)$ is a point of local maximum (minimum) of $u(t,x,y)-\varphi(t,x,y)$. Then the point $(t_0,x_0)\in\Pi$ is a point of local maximum (minimum) of
$u^{y_0}(t,x)-\varphi(t,x,y_0)$. Since $u^{y_0}$ is a v.s. of (\ref{1}) then
$\varphi_t(t_0,x_0,y_0)+H(\nabla_x\varphi(t_0,x_0,y_0))\le 0$ (~respectively, $\varphi_t(t_0,x_0,y_0)+H(\nabla_x\varphi(t_0,x_0,y_0))\ge 0$~). Hence, $u(t,x,y)$ is a v.s. of (\ref{l1}).
To complete the proof it only remains to notice that initial condition (\ref{l2}) is satisfied if and only if $u^y(t,x)$ satisfies (\ref{2}) with initial data $u_0^y$ for all $y\in\R^m$.
\end{proof}

\section{Almost periodic viscosity solutions}

Recall that the space $AP(\R^n)$ of Bohr (or uniform) almost periodic functions is a closure of trigonometric polynomials,
i.e. finite sums $\sum a_\lambda e^{2\pi i\lambda\cdot x}$, in the space $BUC(\R^n)$ (by $\cdot$ we denote the inner product in $\R^n$). It is clear that $AP(\R^n)$ contains continuous periodic functions (with arbitrary lattice of periods). Originally, almost periodic functions were defined with the help of notion of almost-periods, see \cite{LevZh} for details. Let $C_R$ be the cube
$$\{ \ x=(x_1,\ldots,x_n)\in\R^n \ | \ |x|_\infty=\max_{i=1,\ldots,n}|x_i|\le R/2 \ \}, \quad R>0.$$
It is known (see for instance \cite{LevZh}~) that for each function $u\in AP(\R^n)$ there exists the mean value
$$\bar u=\dashint_{\R^n} u(x)dx\doteq\lim\limits_{R\to+\infty}R^{-n}\int_{C_R} u(x)dx$$ and, more generally, the Bohr-Fourier coefficients
$$
a_\lambda=\dashint_{\R^n} u(x)e^{-2\pi i\lambda\cdot x}dx, \quad\lambda\in\R^n.
$$
The set
$$ Sp(u)=\{ \ \lambda\in\R^n \ | \ a_\lambda\not=0 \ \} $$ is called the spectrum of an almost periodic function  $u(x)$. It is known \cite{LevZh}, that the spectrum $Sp(u)$ is at most countable.

Now we assume that the initial function $u_0(x)\in AP(\R^n)$. Let $M_0$ be the smallest additive subgroup of $\R^n$ containing $Sp(u_0)$. Notice that in the case when $u_0$ is a continuous periodic function $M_0$ coincides with the dual lattice to the lattice of periods.

Our first result is the following.

\begin{theorem}\label{th2}
Let $u(t,x)$ be a unique v.s. of (\ref{1}), (\ref{2}). Then $u(t,\cdot)\in C([0,+\infty),AP(\R^n))$ and for all $t>0$ $Sp(u(t,\cdot))\subset M_0$.
\end{theorem}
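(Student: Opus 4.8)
The plan is to treat the two assertions separately and, for the spectral statement, to reduce everything to the periodic case.

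\emph{Almost periodicity and time continuity.} I would invoke the classical Bochner criterion (see \cite{LevZh}): a function $v\in BUC(\R^n)$ lies in $AP(\R^n)$ if and only if the family of its translates $\{v(\cdot+h):h\in\R^n\}$ is precompact in $BUC(\R^n)$. Since $H$ depends only on $\nabla_x u$, problem (\ref{1}), (\ref{2}) is invariant under translations in $x$, so for every $h$ the function $(t,x)\mapsto u(t,x+h)$ is again a v.s., now with initial datum $u_0(\cdot+h)$; writing $S_t$ for the solution map $u_0\mapsto u(t,\cdot)$, this reads $u(t,\cdot+h)=S_t(u_0(\cdot+h))$, and by Corollary~\ref{cor1} $S_t$ is nonexpansive, hence continuous, on $BUC(\R^n)$. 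Consequently the translates of $u(t,\cdot)$ form the image under $S_t$ of the precompact set $\{u_0(\cdot+h):h\in\R^n\}$, hence are precompact, so $u(t,\cdot)\in AP(\R^n)$. For the continuity in $t$ I would use only that $u\in BUC_{loc}(\bar\Pi)$: uniform continuity of $u$ on each layer $[0,T)\times\R^n$ gives $\|u(t,\cdot)-u(s,\cdot)\|_\infty\to 0$ as $s\to t$, and since $AP(\R^n)$ is closed in $BUC(\R^n)$ this yields $u\in C([0,+\infty),AP(\R^n))$.

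\emph{Spectral inclusion.} Here the idea is a torus lift, for which Lemmas~\ref{lem1}--\ref{lem3} are tailor-made. By the Bochner--Fej\'er approximation theorem $u_0$ is a uniform limit of trigonometric polynomials with frequencies in $Sp(u_0)\subset M_0$; by Corollary~\ref{cor1} the corresponding viscosity solutions converge uniformly to $u$, and since $\{v\in AP(\R^n):Sp(v)\subset M_0\}$ is closed in $BUC(\R^n)$ (Bohr-Fourier coefficients being $\|\cdot\|_\infty$-continuous), it suffices to prove $Sp(u(t,\cdot))\subset M_0$ when $u_0$ is a trigonometric polynomial. In that case the finitely many frequencies lie in a finitely generated subgroup $\Gamma=\Z\mu_1+\dots+\Z\mu_m\subset M_0$ with $\mu_1,\dots,\mu_m$ rationally independent, and one writes $u_0(x)=F(\Lambda x)$, where $\Lambda x=(\mu_1\cdot x,\dots,\mu_m\cdot x)\in\R^m$ and $F$ is a $\Z^m$-periodic trigonometric polynomial on $\R^m$. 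Let $V(t,w)$ be the v.s.\ of $V_t+H(\Lambda^*\nabla_w V)=0$ on $\R_+\times\R^m$ with datum $F$ (note $H\circ\Lambda^*\in C(\R^m)$); since $F$ is $\Z^m$-periodic, uniqueness together with translation invariance forces $V(t,\cdot)$ to be $\Z^m$-periodic, hence $Sp(V(t,\cdot))\subset\Z^m$. The final step is to identify $u(t,x)=V(t,\Lambda x)$: using Lemma~\ref{lem2} to turn $\ker\Lambda$ into a coordinate subspace of $\R^n$ and Lemma~\ref{lem1} to discard those variables reduces matters to injective $\Lambda$; then, decomposing $\R^m$ into $\operatorname{im}\Lambda$ and its orthogonal complement and observing that $\Lambda^*$ annihilates the latter, the equation for $V$ does not involve the derivatives in the complementary variables, so Lemma~\ref{lem3} shows that the slice of $V$ over $\operatorname{im}\Lambda$ is a v.s.\ of the reduced equation — which, by a short linear-algebra check and a further application of Lemma~\ref{lem2}, is precisely (\ref{1}). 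By uniqueness $u(t,x)=V(t,\Lambda x)$, so $u(t,\cdot)$ is a uniform limit of trigonometric polynomials with frequencies in $\Lambda^*(\Z^m)=\Gamma$, whence $Sp(u(t,\cdot))\subset\Gamma\subset M_0$. (A more abstract alternative: since $S_t$ is nonexpansive it commutes with every translation of the Bohr compactification $b\R^n$, not merely those by $\R^n$; as $u_0$ is fixed by all translations annihilating $M_0$, so is $u(t,\cdot)$, which is equivalent to $Sp(u(t,\cdot))\subset M_0$.)

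\emph{Main difficulty.} The precompactness argument, the time continuity, and the periodic case (immediate from uniqueness) are routine. The crux is the last step of the spectral part: transferring the viscosity property through the substitution $w=\Lambda x$ when $\Lambda$ is degenerate and, moreover, its image sits in $\R^m$ as an ``irrational'' subspace, so that no integer change of variables aligns it with a coordinate subspace. This is exactly where Lemmas~\ref{lem1}--\ref{lem3} have to be combined carefully; the essential verifications are that $\Lambda^*$ kills $(\operatorname{im}\Lambda)^\perp$, so that the equation for $V$ is genuinely independent of the complementary gradient variables and Lemma~\ref{lem3} applies, and that after all the linear changes the reduced Hamiltonian is still $H$.
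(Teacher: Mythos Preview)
Your argument is correct and, for the spectral inclusion, follows the same torus-lift strategy as the paper: write $u_0=F\circ\Lambda$ with $F$ periodic on $\R^m$, solve $V_t+H(\Lambda^*\nabla V)=0$, observe $V$ is $\Z^m$-periodic, and identify $u(t,x)=V(t,\Lambda x)$ via Lemmas~\ref{lem1}--\ref{lem3}. Two points of comparison are worth recording.

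For the almost periodicity itself you give an independent argument via the Bochner precompactness criterion and the nonexpansiveness of $S_t$ from Corollary~\ref{cor1}. The paper instead obtains $u(t,\cdot)\in AP(\R^n)$ as a by-product of the representation $u(t,x)=V(t,\Lambda x)$ (approximating the periodic $V(t,\cdot)$ by Fej\'er sums). Your route is shorter and more transparent; the paper in fact acknowledges this in the Remark following the proof, but prefers the torus-lift argument because the same construction is reused in Theorem~\ref{thM1}.

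For the identification $u(t,x)=V(t,\Lambda x)$ the linear algebra is organized differently. The paper introduces the invertible shear $\tilde\Lambda(x,z)=(x,z+\Lambda x)$ on $\R^{n+m}$, so that $\tilde\Lambda^*(p,q)=(p+\Lambda^*q,q)$; one application each of Lemma~\ref{lem1}, Lemma~\ref{lem2} (with the invertible $\tilde\Lambda$), and Lemma~\ref{lem3} (slice at $z=0$) then gives the result directly, with no case distinction on $\ker\Lambda$ and no need to check that ``the reduced Hamiltonian is still $H$'' --- it is $H$ by construction, since $\bar H(p,q)=H(p)$. Your route (quotient out $\ker\Lambda$, then orthogonally split $\R^m$ along $\operatorname{im}\Lambda$) also works, and the key verification $B=C^*$ you allude to is indeed a one-line computation, but the paper's shear trick packages all of this into a single invertible change of variables in the enlarged space and avoids the separate injectivity reduction.
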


\begin{proof}
We first assume that the initial function is a trigonometric polynomial $\displaystyle u_0(x)=\sum_{\lambda\in S}a_\lambda e^{2\pi i\lambda\cdot x}$. Here $S=Sp(u_0)\subset\R^n$ is a finite set.  Then the subgroup $M_0$ is a finite generated torsion-free abelian group and therefore it is a free abelian group of
finite rank (see \cite{Lang}). Hence, there is a basis $\lambda_j\in M_0$, $j=1,\ldots,m$, so that every element $\lambda\in M_0$ can be uniquely represented as $\displaystyle\lambda=\lambda(\bar k)=\sum_{j=1}^m k_j\lambda_j$, $\bar k=(k_1,\ldots,k_m)\in\Z^m$. In particular, we can represent the initial function as
$$u_0(x)=\sum_{\bar k\in J} a_{\bar k}e^{2\pi i\sum_{j=1}^m k_j\lambda_j\cdot x}, \quad a_{\bar k}\doteq a_{\lambda(\bar k)},$$
where $J=\{ \ \bar k\in\Z^m \ | \ \lambda(\bar k)\in S \ \}$ is a finite set. By this representation $u_0(x)=v_0(y(x))$, where
$$
v_0(y)=\sum_{\bar k\in J} a_{\bar k}e^{2\pi i\bar k\cdot y}
$$
is a periodic function on $\R^m$ with the standard lattice of periods $\Z^m$ while $y=\Lambda x$ is a linear map from  $\R^n$ to $\R^m$ defined by the equalities $\displaystyle y_j=\lambda_j\cdot x=\sum_{i=1}^n\lambda_{ji}x_i$,  $\lambda_{ji}$, $i=1,\ldots,n$, being coordinates of the vectors $\lambda_j$, $j=1,\ldots,m$. We consider the Hamilton-Jacobi equation
\begin{equation}\label{1r}
v_t+H(\Lambda^*\nabla_y v)=0, \quad v=v(t,y), \ t>0, \ y\in\R^m,
\end{equation}
where $\displaystyle (\Lambda^*\nabla_y v)_i=\sum_{j=1}^m \lambda_{ji}\partial_{y_j} v$, $i=1,\ldots,n$.
Let $v(t,y)$ be a v.s. of the Cauchy problem for equation (\ref{1r}) with initial function $v_0(y)$. By the periodicity of $v_0$ the function $v(t,y+e)$ is a v.s. of the same problem
for each vector $e\in\Z^m$. In view of uniqueness of v.s. we conclude that $v(t,y+e)\equiv v(t,y)$ $\forall e\in\Z^m$, i.e. $v(t,y)$ is a periodic function with respect to $y$
(with the lattice of periods $\Z^m$). In view of periodicity $v(t,\cdot)\in C([0,+\infty),C(\T^m))$, where $\T^m=\R^m/\Z^m$ is a torus
(it can be identified with the periodicity cell $[0,1)^m$~).
Let us demonstrate that $u(t,x)=v(t,\Lambda x)$.
We introduce the invertible linear operator $\tilde\Lambda$ on the extended space $\R^{n+m}$, defined by the equality $\tilde\Lambda (x,z)=(x,z+\Lambda x)$.
Since $\tilde\Lambda^*(x,y)=(x+\Lambda^*y,y)$, equation (\ref{1r}) can be rewritten in the form $v_t+H(\tilde\Lambda^*(0,\nabla_y v))=0$, where $H(p,q)=H(p)$, $p\in\R^n$, $q\in\R^m$. By Lemma~\ref{lem1} the function $v(t,x,y)=v(t,y)$ is a v.s. of equation
$v_t+H(\tilde\Lambda^*(\nabla_x v,\nabla_y v))=0$ in the extended domain $(t,x,y)\in\R_+\times\R^n\times\R^m$.
Then, by Lemma~\ref{lem2} the function $u(t,x,z)=v(t,z+\Lambda x)$ is a v.s. of (\ref{1}) considered in the extended domain $(t,x,z)\in\R_+\times\R^n\times\R^m$.  Applying Lemma~\ref{lem3},
we conclude that $u^z(t,x)=u(t,x,z)$ is a v.s. of (\ref{1}) for all $z\in\R^m$. Taking $z=0$, we find that
$u(t,x)=v(t,\Lambda x)$ is a v.s. of (\ref{1}). It is clear that $u(0,x)=v_0(\Lambda x)=u_0(x)$, that is, $u(t,x)$
is a v.s. of original problem (\ref{1}), (\ref{2}). For a fixed $t>0$ a continuous periodic function $v(t,y)$ can be uniformly approximated by finite sums $\displaystyle v_m(y)=\sum_{|\bar k|\le m} a_{m,\bar k}e^{2\pi i\bar k\cdot y}$ (for instance, one can take Fej\'er sums), so that $v_m\rightrightarrows v(t,y)$ as $m\to\infty$. This implies that $v_m(\Lambda x)\rightrightarrows u(t,x)$ as $m\to\infty$ and since
$$
v_m(\Lambda x)=\sum_{|\bar k|\le m} a_{m,\bar k}e^{2\pi i\lambda(\bar k)\cdot x}
$$
are trigonometric polynomials while $\displaystyle \lambda(\bar k)=\sum_{j=1}^m k_j\lambda_j\in M_0$,
we find that the limit function $u(t,\cdot)\in AP(\R^n)$ and $Sp(u(t,\cdot))\subset M_0$. This completes the proof in the case when $u_0(x)$ is a trigonometric polynomial.

In the general case $u_0\in AP(\R^n)$ there exists a sequence of trigonometric polynomials $u_{0m}(x)$, $m\in\N$, such that $Sp(u_{0m})\subset M_0$ and $u_{0m}\rightrightarrows u_0$ as $m\to\infty$. We can choose $u_{0m}$ as the
sequence of Bochner-Fej\'er trigonometric polynomials, see \cite{LevZh}. As it has been already established, a unique v.s. $u_m(t,x)$ of (\ref{1}), (\ref{2}) with initial data $u_{0m}$ belongs to the space $C([0,+\infty),AP(\R^n))$ and $Sp(u_m(t,\cdot))\subset M_0$ for all $t\ge 0$. Since  $\|u_m-u\|_\infty\le\|u_{0m}-u_0\|_\infty\to 0$ as $m\to\infty$,
then $u_m\rightrightarrows u$ as $m\to\infty$ and, therefore, the limit v.s. $u(t,x)\in C([0,+\infty),AP(\R^n))$ and $Sp(u(t,\cdot))\subset M_0$ for all $t\ge 0$.
The proof is complete.
\end{proof}

\begin{remark}
In view of Corollary~\ref{cor1} and translation invariance of v.s. for all $t>0$
$$
\|u(t,x+l)-u(t,x)\|_\infty\le\|u_0(x+l)-u_0(x)\|_\infty.
$$
This estimate implies that any $\varepsilon$-almost-period $l$ of $u_0(x)\in AP(\R^n)$ is a common $\varepsilon$-almost period of $u(t,\cdot)$ for all $t\ge 0$. From this and the known relation between the $\varepsilon$-almost-periods and the spectrum  it readily follows the assertions of Theorem~\ref{th2}. This reasons seem to be more simple but we preferred the proof which does not use the notion of almost-periods and based on reduction to the periodic case because this reduction will be utilized also in the proof of Theorem~\ref{thM1} below.
\end{remark}

\section{The case of single space variable}

\subsection{Long time behavior of periodic v.s.}

Now we consider the case of single space variable when our equation (\ref{1}) has the form
\begin{equation}\label{1d1}
u_t+H(u_x)=0,
\end{equation}
$u=u(t,x)$, $(t,x)\in\Pi=\R_+\times\R$.
We are going to investigate the long time behavior of almost periodic viscosity solutions of the Cauchy problem for equation (\ref{1d1})
with initial condition
\begin{equation}\label{1d2}
u(0,x)=u_0(x).
\end{equation}
In the one-dimensional case there is a direct connection between v.s. of (\ref{1d1}), (\ref{1d2}) and entropy solutions (in Kruzhkov sense \cite{Kr}~) of the corresponding Cauchy problem for the conservation law
\begin{equation}\label{cl1}
v_t+H(v)_x=0
\end{equation}
with initial data
\begin{equation}\label{clI}
v(0,x)=v_0(x).
\end{equation}
More precisely, assume that $u_0(x)$ is Lipshitz continuous, i.e. its generalized derivatives $u_0'(x)\in L^\infty(\R)$. Then the unique v.s. $u(t,x)$ of (\ref{1d1}), (\ref{1d2}) also  satisfies the Lipschitz condition with respect to the space variable and the derivative $v=u_x(t,x)\in L^\infty(\Pi)$ is a unique entropy solution of (\ref{cl1}), (\ref{clI}) with $v_0(x)=u_0'(x)$, cf. \cite{CL,KrHJ1}. Observe that (\ref{cl1}) can be derived from (\ref{1d1}) by formal differentiation: $v_t+H(v)_x=u_{xt}+H(u_x)_x=0$, $v=u_x$.

Recall the notion of entropy solution (e.s. for short) of the Cauchy problem for a multidimensional conservation law
\begin{equation}\label{cons}
v_t+\div_x f(v)=0,
\end{equation}
$v=v(t,x)$, $(t,x)\in\Pi=\R_+\times\R^n$, with merely continuous flux vector $f(v)=(f_1(v),\ldots,f_n(v))\in C(\R,\R^n)$ and with initial condition
\begin{equation}\label{consI}
v(0,x)=v_0(x)\in L^\infty(\R^n).
\end{equation}

\begin{definition}\label{def2} A bounded measurable function $v=v(t,x)\in L^\infty(\Pi)$ is called an e.s. of (\ref{cons}), (\ref{consI}) if for all $k\in\R$
\begin{equation}
\label{en1}
\frac{\partial}{\partial t} |v-k|+\div_x [\sign(v-k)(f(v)-f(k))]\le 0
\end{equation}
in the sense of distributions on $\Pi$ (in $\D'(\Pi)$), and
$$
\esslim_{t\to 0+} v(t,\cdot)=v_0 \quad \mbox{ in } L^1_{loc}(\R^n).
$$
\end{definition}

Here $\sign u=\left\{\begin{array}{rr} 1, & u>0, \\ -1, & u\le 0
\end{array}\right.$
and relation (\ref{en1}) means that for each test function $h=h(t,x)\in C_0^1(\Pi)$,
$h\ge 0$,
$$
\int_\Pi \,[|v-k|h_t+\sign(v-k)(f(v)-f(k))\cdot\nabla_x h]dtdx\ge 0.
$$
Taking in (\ref{en1}) $k=\pm R$, where $R\ge\|v\|_\infty$, we obtain that $v_t+\div_x f(v)=0$ in $\D'(\Pi)$, that is, an e.s. $v=v(t,x)$ is a weak solutions of this equation as well.
It was also established in \cite[Corollary~7.1]{Pa6} that, after possible correction on a set of null
measure, an e.s. $u(t,x)$ is continuous on $[0,+\infty)$ as a map $t\mapsto u(t,\cdot)$ into
$L^1_{loc}(\R^n)$. In the sequel we will always assume that this property is satisfied.

Suppose that the initial function $v_0$ is periodic with a lattice of periods $\LL$, i.e., $v_0(x+e)=v_0(x)$ a.e. on $\R^n$ for every $e\in \LL$. Let $\T^n=\R^n/\LL$ be the corresponding $n$-dimensional torus, and $\LL'$ be the dual lattice $$\LL'=\{ \ \xi\in\R^n \ | \ \xi\cdot x\in\Z \ \forall x\in \LL \ \}.$$
In the case under consideration when the flux vector is merely continuous the property of finite speed of propagation for initial perturbation may be violated, which, in the multidimensional situation $n>1$, may even lead to the nonuniqueness of e.s. to Cauchy problem (\ref{cons}), (\ref{consI}), see examples in \cite{KrPa1,KrPa2}. But for a periodic initial function $v_0(x)$, an e.s. $v(t,x)$ of (\ref{cons}), (\ref{consI}) is unique (in the class of all e.s., not necessarily periodic) and space-periodic, the proof can be found in \cite{PaMax1}. It is also shown in \cite{PaMax1} that the mean value of e.s. over the period does not depend on time:
\begin{equation}\label{mass}
\int_{\T^n}v(t,x)dx=I\doteq\int_{\T^n}v_0(x)dx,
\end{equation}
where $dx$ is the normalized Lebesgue measure on $\T^n$.

The following theorem, proven in \cite{PaNHM}, generalizes the previous results of \cite{Daf,PaAIHP}.

\begin{theorem}\label{th3}
Suppose that
\begin{eqnarray}\label{ND2}
\forall\xi\in \LL', \xi\not=0 \ \mbox{ the function } u\to\xi\cdot f(v) \nonumber\\ \mbox{ is not affine on any vicinity of } I.
\end{eqnarray}
Then
\begin{equation}\label{dec}
\lim_{t\to +\infty} v(t,\cdot)=I \ \mbox{ in } L^1(\T^n).
\end{equation}
\end{theorem}

Actually, condition (\ref{ND2}) is necessary and sufficient for the decay property (\ref{dec}).
Now we return to the case of one space variable and assume that initial function $v_0\in L^\infty(\R)$ is periodic: $v_0(x+1)=v_0(x)$ a.e. in $\R$. This can be written as $v_0\in L^\infty(\T)$, where $\T=\R/\Z$ is a circle (~it can be identified with the periodicity cell $[0,1)$~). Denote by
$I$ the mean value of $v_0$: $I=\int_\T v_0(x) dx=\int_0^1v_0(x) dx$. The unique e.s. $v=v(t,x)$ of (\ref{cl1}), (\ref{clI}) is also $x$-periodic, $v\in C([0,+\infty),L^1(\T))$, and, as was shown in recent paper \cite{PaMZM}, the following long time asymptotic property holds.

\begin{theorem}\label{th4}
  There exist a periodic function $w(y)\in L^\infty(\T)$ (profile) and a constant $c\in\R$ (speed) such that
  \begin{equation}\label{5}
  v(t,x)-w(x-ct)\mathop{\to}_{t\to+\infty} 0 \ \mbox{ in } L^1(\T).
  \end{equation}
 Moreover, $\int_\T w(y)dy=I$, and the function $H(v)-cv\equiv\const$ on the interval $(\essinf w(y),\esssup w(y))$.
 \end{theorem}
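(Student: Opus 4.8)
The plan is to study the nonlinear entropy-solution semigroup $S(t)\colon v_0\mapsto v(t,\cdot)$ on $L^1(\T)$ through its $\omega$-limit set. First I would dispose of the degenerate case: if $H$ is not affine on any neighbourhood of $I$, then (with $\LL=\LL'=\Z$) the hypothesis of Theorem~\ref{th3} holds and that theorem already gives $v(t,\cdot)\to I$ in $L^1(\T)$, so the assertion holds with $w\equiv I$, the interval $(\essinf w,\esssup w)$ being empty, and arbitrary $c$. Otherwise let $(\alpha,\beta)$ be the \emph{maximal} open interval containing $I$ on which $H(v)=cv+d$, which we may take bounded since $v$ is bounded. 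Passing to the Galilean frame $y=x-ct$, $\hat v(t,y)=v(t,y+ct)$, turns the equation into $\hat v_t+\hat H(\hat v)_y=0$ with $\hat H(v)=H(v)-cv$, and now $\hat H\equiv d$ exactly on $(\alpha,\beta)$; one checks directly from Definition~\ref{def2} that every $L^\infty$ function with values in $[\alpha,\beta]$ is a stationary entropy solution of this reduced equation. It thus suffices to prove that $\hat v(t,\cdot)$ converges in $L^1(\T)$ to such a stationary profile, its mean being $I$ by (\ref{mass}).

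Next I would establish compactness. By the $L^1$-contraction property of entropy solutions and translational invariance, $\|\hat v(t,\cdot+h)-\hat v(t,\cdot)\|_{L^1(\T)}\le\|\hat v_0(\cdot+h)-\hat v_0\|_{L^1(\T)}\to0$ as $h\to0$, uniformly in $t$; together with $\|\hat v(t,\cdot)\|_\infty\le\|v_0\|_\infty$ and the Kolmogorov--Riesz criterion this makes the orbit $\{\hat v(t,\cdot):t\ge0\}$ relatively compact in $L^1(\T)$. Hence the $\omega$-limit set $\Omega$ is nonempty, compact, connected and $S(t)$-invariant; and since $S(t)|_\Omega$ is a surjective non-expansive self-map of a compact metric space, it is an isometry, so $S(t)|_\Omega$ extends to a group.

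For each convex $\eta$ with entropy flux $q$, the functional $t\mapsto\int_\T\eta(\hat v(t,x))\,dx$ is non-increasing (superpose the Kruzhkov inequalities (\ref{en1}) over $k$ and integrate over $\T$), hence convergent; every element of $\Omega$ realizes this limit, so $\int_\T\eta(\cdot)\,dx$ is constant on $\Omega$, whence, $S(t)$ mapping $\Omega$ into itself, $t\mapsto\int_\T\eta(S(t)w)\,dx$ is constant for $w\in\Omega$; as the entropy-dissipation measure is nonnegative with vanishing $x$-integral, this forces $\partial_t\eta(\hat v)+\partial_y q(\hat v)=0$ in $\D'$. Thus every element of $\Omega$ is an \emph{entropy-conserving} periodic solution of the reduced equation with mean $I$. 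Granting the Key Claim below — that such a solution has values in $[\alpha,\beta]$, hence is stationary — $\Omega$ lies in the fixed-point set of $S(t)$, and then convergence follows at once: $t\mapsto\|\hat v(t,\cdot)-w\|_{L^1(\T)}$ is non-increasing for fixed $w\in\Omega$, hence has a limit, which is $0$ because $w\in\Omega$; so $\hat v(t,\cdot)\to w$ and $\Omega=\{w\}$. Returning to the original variables, $v(t,x)-w(x-ct)\to0$ in $L^1(\T)$, $\int_\T w=I$, and $H(v)-cv\equiv d$ on $(\alpha,\beta)\supseteq(\essinf w,\esssup w)$.

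The hard part — the Key Claim — is that an entropy-conserving periodic solution $\hat v$ of $\hat v_t+\hat H(\hat v)_y=0$ with mean $I$ has values in $[\alpha,\beta]$. The plan is to analyze the overshoot $V=(\hat v-\beta)^+$: applying entropy-conservation of $\hat v$ to the convex entropies $(v-\beta)^+$ and $(v-k)^+$ one finds that $V$ itself solves $V_t+G(V)_y=0$ with $G(v)=\hat H(\beta+v)-d$, that $V$ is again entropy-conserving and periodic, that $V\ge0$, and that $\essinf_y V=0$ (since $I<\beta$); one must conclude $V\equiv0$, and symmetrically for $(\alpha-\hat v)^+$. When $\hat H$ is genuinely nonlinear just above $\beta$ this should come from a decay argument in the spirit of Theorem~\ref{th3} applied to $V$; the delicate situation, and where I expect the real technical core to lie, is when $\hat H$ is affine on an interval abutting $\beta$ — necessarily with nonzero slope, by maximality of $(\alpha,\beta)$ — so that $V$ formally obeys a \emph{linear transport} equation: excluding a nontrivial such $V$ must exploit the mismatch between that transport speed and the zero speed at which entropy-conservation forces the level $\{\hat v=\beta\}$ to propagate out of the flat interior $(\alpha,\beta)$. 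Carrying this dichotomy through carefully, together with verifying that $c$ (the slope of $H$ on $(\alpha,\beta)$, determined by $H$ and $I$) is the common speed over $\Omega$, is where the bulk of the work lies.
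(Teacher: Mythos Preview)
The paper does not actually prove Theorem~\ref{th4}: it is quoted verbatim from the companion article \cite{PaMZM} (``as was shown in recent paper \cite{PaMZM}, the following long time asymptotic property holds''), and is then used as a black box in the proof of Theorem~\ref{thM}. So there is no in-paper argument to compare your proposal against.

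On its own merits, your plan is a coherent implementation of the standard $\omega$-limit/entropy-dissipation strategy (in the spirit of Dafermos and Chen--Frid): Galilean reduction to a flux that is constant on the maximal affine interval around $I$; precompactness of the orbit in $L^1(\T)$ via the $L^1$-contraction and Kolmogorov--Riesz; the observation that $S(t)$ restricted to the $\omega$-limit set is a surjective nonexpansion on a compact metric space and hence an isometry; constancy of all convex entropy integrals on $\Omega$, forcing every trajectory through $\Omega$ to be entropy-conserving; and, once the Key Claim is granted, the usual ``distance to a fixed point is nonincreasing and hits zero along a subsequence'' endgame. All of these steps are correct as stated (for the contraction step one should note that the periodic Kruzhkov theory invoked in \cite{PaMax1} supplies uniqueness and $L^1(\T)$-contraction even for merely continuous flux).

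Where your outline is genuinely incomplete is exactly where you flag it: the Key Claim that an entropy-conserving periodic solution with mean $I$ is confined to $[\alpha,\beta]$. Your reduction to $V=(\hat v-\beta)^+$ and the dichotomy (genuinely nonlinear just above $\beta$ versus affine with nonzero slope abutting $\beta$) is the right decomposition, but the second branch is only described heuristically. Two points deserve care there: (i) passing from entropy conservation for $\hat v$ to entropy conservation for $V$ with respect to the shifted flux $G$ requires checking all entropies $(V-k)^+$, not just the one that produces $V$; and (ii) the ``mismatch of speeds'' argument must be made quantitative --- one has to show that a nontrivial periodic $V\ge 0$ with $\essinf V=0$, entropy-conserving for a flux that is linear with nonzero slope near $0$, cannot persist, which is not an immediate consequence of Theorem~\ref{th3} (that theorem needs nonaffinity near the \emph{mean} of $V$, not near $0$). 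This is where the technical core of \cite{PaMZM} presumably lies, and your sketch would need to be fleshed out substantially to stand on its own.
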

Notice that in the case when the function $H(v)$ is not affine in any vicinity of $I$ it follows from Theorem~\ref{th4} that $w(y)\equiv I$ and (\ref{5}) reduces to the decay property
$$
v(t,x)\mathop{\to}_{t\to+\infty} I \ \mbox{ in } L^1(\T)
$$
from the previous Theorem~\ref{th3} (in the case $n=1$).

\medskip
Making the change $\tilde u=u+H(0)t$, which transforms the equation (\ref{1d1}) to the equation $\tilde u_t+H(\tilde u_x)-H(0)=0$,
we may suppose that $H(0)=0$. Then constants are v.s. of (\ref{1d1}), and by Corollary~\ref{cor1} with $u_1=u$, $u_2=0$ we find that a v.s. $u=u(t,x)$ is bounded, namely $\|u\|_\infty\le\|u_0\|_\infty$.

Let $(a,b)$, $-\infty\le a<0<b<+\infty$ be the maximal neighborhood of zero such that $H(u)$ is linear on $(a,b)$: $H(u)=cu$ on $(a,b)$ for some $c\in\R$. If such an interval does not exist, i.e. $H(u)$ is not linear in any vicinity of zero, we set $a=b=0$. Assume that the initial function $u_0(x)$ is periodic, $u_0(x)\in C(\T)$, and $u(t,x)$ is a unique v.s. of (\ref{1d1}), (\ref{1d2}). Since $u(t,x+1)$ is a v.s. of the same problem, then $u(t,x+1)=u(t,x)$ by the uniqueness of v.s. The next our result is similar to Theorem~\ref{th4}, this is about the long time convergence of $u(t,x)$ to a traveling wave.

\begin{theorem}\label{thM}
There exist a continuous periodic function $p(y)\in C(\T)$ and a real constant $c$ such that
\begin{equation}\label{6}
u(t,x)-p(x-ct)\rightrightarrows 0 \quad \mbox{ as } t\to+\infty.
\end{equation}
Moreover, the profile $p(y)$ satisfies the one-sided Lipschitz estimates
\begin{equation}\label{7}
a(y_2-y_1)\le p(y_2)-p(y_1)\le b(y_2-y_1) \quad \forall y_1,y_2\in\R, y_2>y_1,
\end{equation}
while the speed $c$ is determined by the condition that $H(u)=cu$ on $(a,b)$. In the case $a=b=0$
it follows from (\ref{7}) that $p(y)\equiv p_0=\const$ and (\ref{6}) reduces to the following decay property
\begin{equation}\label{8}
u(t,x)\rightrightarrows p_0 \quad \mbox{ as } t\to+\infty
\end{equation}
(in this case the value of $c$ does not matter).
\end{theorem}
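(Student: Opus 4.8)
The plan is to normalize $H(0)=0$ (via the substitution $\tilde u=u+H(0)t$), so that constants are v.s.\ of (\ref{1d1}) and, by Corollary~\ref{cor1} (applied with $t_0\ge0$ as the initial instant), $t\mapsto\|u(t,\cdot)-\hat u(t,\cdot)\|_\infty$ is non-increasing for any two v.s.\ $u,\hat u$ (in particular $\|u(t,\cdot)\|_\infty\le\|u_0\|_\infty$). I would first prove the theorem for Lipschitz periodic $u_0$ and then reach general $u_0\in C(\T)$ by approximation: taking smooth periodic $u_{0m}\rightrightarrows u_0$ and letting $u_m$ be the corresponding v.s., Corollary~\ref{cor1} gives $\|u_m(t,\cdot)-u(t,\cdot)\|_\infty\le\|u_{0m}-u_0\|_\infty\to0$ uniformly in $t$.

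For Lipschitz $u_0$ I would use the correspondence recalled in the text: $v=u_x$ is the unique periodic e.s.\ of $v_t+H(v)_x=0$ with data $v_0=u_0'$, and by periodicity its conserved mean is $I=\int_\T u_0'\,dx=0$. Theorem~\ref{th4} then gives a periodic $w\in L^\infty(\T)$ with $\int_\T w=0$ and a constant $c$ such that $v(t,\cdot)-w(\cdot-ct)\to0$ in $L^1(\T)$ and $H(v)-cv\equiv\const$ on $(\essinf w,\esssup w)$. Since $\int_\T w=0$, either $w\equiv0$ or $\essinf w<0<\esssup w$; in the latter case $0$ lies in the open interval, so $\const=H(0)=0$, $H$ is affine with slope $c$ near $0$, hence $(\essinf w,\esssup w)\subset(a,b)$ and $c$ is the slope of $H$ on $(a,b)$ — in particular $w(y)\in[a,b]$ a.e. I then set $p_\beta(y)=\int_0^y w(s)\,ds+\beta$ with $\beta\in\R$ to be chosen; $p_\beta$ is periodic and satisfies (\ref{7}), and since $H(w)=cw$ a.e.\ the function $w(x-ct)$ is the e.s.\ with data $w=p_\beta'$, so the v.s.\ of (\ref{1d1}) with data $p_\beta$ coincides with $P_\beta(t,x):=p_\beta(x-ct)$ (it could differ from it only by an additive function of $t$, which the equation kills).

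Next I would run a comparison argument to pin down $\beta$. Put $A_\beta(t)=\essinf_x(u(t,x)-P_\beta(t,x))$ and $B_\beta(t)=\esssup_x(u(t,x)-P_\beta(t,x))$; by Corollary~\ref{cor1} (started at $t_0$) $A_\beta$ is non-decreasing, $B_\beta$ is non-increasing, $A_\beta\le B_\beta$, so both converge. Writing $u(t,x)-\int_0^x w(s-ct)\,ds=\phi(t)+\epsilon(t,x)$ with $\phi(t)$ the spatial mean and $\epsilon(t,\cdot)$ of zero mean, one has $\|\epsilon(t,\cdot)\|_\infty\le\|v(t,\cdot)-w(\cdot-ct)\|_{L^1(\T)}\to0$, whence $A_\beta(t),B_\beta(t)=\phi(t)-\beta+o(1)$; since $A_\beta$ converges, $\phi(t)$ converges to some $\alpha\in\R$, and the choice $\beta=\alpha$ forces $A_\alpha(t),B_\alpha(t)\to0$, i.e.\ $u(t,x)-p_\alpha(x-ct)\rightrightarrows0$. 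This proves the theorem for Lipschitz data with profile $p=p_\alpha$ (obeying (\ref{7})) and speed $c$.

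Finally, for general $u_0\in C(\T)$, each $u_m$ converges to a traveling wave $p_m(x-ct)$ with the \emph{same} speed $c$ — the speed depends only on the interval $(a,b)$, not on $m$, precisely because $I=0$ for every $u_{0m}$ puts $0$ into the flatness interval of $H$ (when $a=b=0$, the observation following Theorem~\ref{th4} forces $w\equiv0$, all $p_m$, hence $p$, are constant, and (\ref{6}) becomes (\ref{8})). Letting $t\to+\infty$ in $\|u_m-u_{m'}\|_\infty\le\|u_{0m}-u_0\|_\infty+\|u_{0m'}-u_0\|_\infty$ and using $\|p_m(\cdot-ct)-p_{m'}(\cdot-ct)\|_\infty=\|p_m-p_{m'}\|_\infty$ shows $(p_m)$ is uniformly Cauchy; its limit $p\in C(\T)$ inherits (\ref{7}), and $\|u(t,\cdot)-p(\cdot-ct)\|_\infty\le\|u_m-u\|_\infty+\|u_m(t,\cdot)-p_m(\cdot-ct)\|_\infty+\|p_m-p\|_\infty$ yields (\ref{6}) after $t\to+\infty$ and then $m\to\infty$. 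I expect the hardest step to be the third paragraph: upgrading the $L^1$-in-space convergence of $u_x$ delivered by Theorem~\ref{th4} to uniform convergence of $u$ itself, which requires identifying the correct additive constant $\alpha$ in the profile and controlling the drift $\phi(t)$ of the spatial mean — the monotonicity furnished by the comparison principle being exactly what makes this possible.
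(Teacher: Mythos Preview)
Your proposal is correct and follows essentially the same route as the paper: reduce to Lipschitz data, pass to the conservation law $v=u_x$, invoke Theorem~\ref{th4} to get the profile $w$ with $\int_\T w=0$ and $w\in[a,b]$, integrate to obtain a periodic $\tilde p$ satisfying (\ref{7}), observe that $\tilde p(x-ct)$ is itself a v.s.\ (since $H$ is linear with slope $c$ on the range of $\tilde p'$), use Corollary~\ref{cor1} from a moving initial time to get monotonicity of $\min_x(u-\tilde p)$ and $\max_x(u-\tilde p)$, and then squeeze via $L^1$-smallness of $\partial_x(u-\tilde p)$; finally approximate general $u_0\in C(\T)$ by Lipschitz data, noting that $c$ is determined by $(a,b)$ alone. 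The only cosmetic difference is in the squeezing step: the paper bounds $M(t)-m(t)\le\int_\T|\partial_x(u-\tilde p)|\,dx\to0$ directly and sets $p=\tilde p+m_*$ with $m_*=\lim m(t)=\lim M(t)$, whereas you decompose $u-\tilde p=\phi(t)+\epsilon(t,x)$ with $\|\epsilon(t,\cdot)\|_\infty\le\|v(t,\cdot)-w(\cdot-ct)\|_{L^1(\T)}\to0$ and take $\alpha=\lim\phi(t)$ --- these are trivially equivalent reformulations.
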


\begin{proof}
First, we consider the case when $u_0$ is Lipschitz. Then $v=u_x(t,x)$ is an e.s. of (\ref{cl1}), (\ref{clI}) with initial data $v_0(x)=u_0'(x)\in L^\infty(\T)$. Obviously, $I=\int_\T v_0(x)dx=0$ and by Theorem~\ref{th4}
$v(t,x)$ converges as $t\to+\infty$ to a traveling wave $w(x-ct)$ in the sense of relation (\ref{5}). In view of Theorem~\ref{th4}, we also find that $\int w(y)dy=0$ and that $a\le w(y)\le b$. Therefore, the function $\tilde p(y)=\int_0^y w(s)ds$ is periodic continuous function satisfying (\ref{7}) (with $\tilde p$ instead of $p$).
Notice that $\tilde p\equiv 0$ in the case $a=b=0$. Let $c\in\R$ is determined by the condition $H(u)=cu$ on $(a,b)$ and is chosen arbitrarily if $a=b=0$. Now it follows from (\ref{6}) that
\begin{equation}\label{9}
\frac{\partial}{\partial x}(u(t,x)-\tilde p(x-ct))=v(t,x)-w(x-ct)\mathop{\to}_{t\to+\infty} 0 \ \mbox{ in } L^1(\T). \end{equation}
Since $H(u)\equiv cu$ on the range of $\tilde p_x$, the traveling wave $\tilde p(x-ct)$ is a v.s. of equation
(\ref{1d1}), $\tilde p_t+H(\tilde p_x)=\tilde p_t+c\tilde p_x=0$. Then, as it readily follows from Corollary~\ref{cor1}, where we replace the initial time $t=0$ by $t=\tau$,
$$
\min_{x\in\T} (u(\tau,x)-p(x-c\tau))\le u(t,x)-p(x-ct)\le \max_{x\in\T} (u(\tau,x)-p(x-c\tau))
$$
for all $x\in\T$, $t,\tau\ge 0$, $t>\tau$. Therefore, the functions
$$
m(t)=\min_{x\in\T} (u(t,x)-p(x-ct)), \quad M(t)=\max_{x\in\T} (u(t,x)-p(x-ct))
$$
are, respectively, increasing and decreasing on $[0,+\infty)$, and $m(t)\le M(t)$. This implies existence of limits
$$
m_\infty=\lim_{t\to+\infty} m(t), \quad M_\infty=\lim_{t\to+\infty} M(t).
$$
In view of (\ref{9}) we also find that
$$
M(t)-m(t)\le \int_\T\left|\frac{\partial}{\partial x}(u(t,x)-\tilde p(x-ct))\right|dx\to 0
$$
as $t\to+\infty$. Hence $m_\infty=M_\infty=m_*$. It now follows from the estimate $m(t)\le u(t,x)-\tilde p(x-ct)\le M(t)$ and the limit relation
$$
\lim_{t\to+\infty}m(t)=\lim_{t\to+\infty}M(t)=m_*,
$$
that $u(t,x)-\tilde p(x-ct)\rightrightarrows m_*$ as $t\to+\infty$. Taking $p(y)=m_*+\tilde p(y)$, we conclude that
(\ref{6}) holds. Clearly, condition (\ref{7}) is also satisfied.

In the general case $u_0\in C(\T)$ we consider a sequence $u_{0n}\in C(\T)$, $n\in\N$ of Lipschitz functions, such that
$u_{0n}\to u_0$ as $n\to\infty$ in $C(\T)$. Let $u_n=u_n(t,x)$ be a v.s. of (\ref{1d1}), (\ref{1d2}) with initial data $u_{0n}$. Then, by Corollary~\ref{cor1},
\begin{equation}\label{10}
\|u_n(t,\cdot)-u(t,\cdot)\|_\infty\le\|u_{0n}-u_0\|_\infty\mathop{\to}_{n\to\infty} 0.
\end{equation}
As we have already established, there exist functions $p_n(y)\in C(\T)$ such that
\begin{eqnarray}\label{11}
u_n(t,x)-p_n(x-ct)\rightrightarrows 0 \quad \mbox{ as } t\to+\infty, \\
\label{12}
a(y_2-y_1)\le p_n(y_2)-p_n(y_1)\le b(y_2-y_1) \quad \forall y_1,y_2\in\R, y_2>y_1.
\end{eqnarray}
Notice that the speed $c$ does not depend on $n$. It follows from (\ref{11}) and (\ref{10}) that for all $m,n\in\N$
$$
\|p_n-p_m\|_\infty\le\|u_n-u_m\|_\infty\to 0 \ \mbox{ as } n,m\to\infty,
$$
that is, $p_n(y)$ is a Cauchy sequence in $C(\T)$. Since this space is complete, there exists a limit function $p(y)\in C(\T)$, so that $p_n(y)\rightrightarrows p(y)$ as $n\to\infty$. Now (\ref{6}), (\ref{7}) follows from (\ref{11}), (\ref{12}) in the limit as $n\to\infty$. The proof is complete.
\end{proof}

\begin{remark}
As is easy to see, in the case when $H(0)$ may be arbitrary, relation (\ref{6}) should be replaced by the following one
$$
u(t,x)+H(0)t-p(x-ct)\rightrightarrows 0 \quad \mbox{ as } t\to+\infty.
$$
\end{remark}

\subsection{Decay of almost periodic v.s.}

Now we assume that initial function in (\ref{1d2}) is almost periodic: $u_0\in AP(\R)$. Suppose also that $H(0)=0$. By Theorem~\ref{th2} a unique v.s. $u(t,x)$ of (\ref{1d1}), (\ref{1d2}) is also almost periodic over the space variables: $u(t,x)\in C([0,+\infty),AP(\R))$, and $Sp(u(t,\cdot)\subset M_0$, where $M_0$ is an additive subgroup of $\R$ spanned by $Sp(u_0)$.
It turns out that this v.s. satisfies the same decay property as in periodic case, cf. Theorem~\ref{thM}.

\begin{theorem}\label{thM1}
Assume that $H(u)$ is not linear in any vicinity of zero. Then
$$
u(t,\cdot)\rightrightarrows c=\const \quad \mbox{ as } t\to+\infty.
$$
\end{theorem}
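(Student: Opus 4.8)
The plan is to reduce the almost periodic case to the periodic case already settled in Theorem \ref{thM}, exploiting the reduction machinery (Lemmas \ref{lem1}--\ref{lem3}) together with the spectral control provided by Theorem \ref{th2}. First I would approximate $u_0\in AP(\R)$ by trigonometric polynomials $u_{0k}$ with $Sp(u_{0k})\subset M_0$, chosen as Bochner--Fej\'er polynomials so that $u_{0k}\rightrightarrows u_0$. By Corollary \ref{cor1}, the corresponding v.s.\ satisfy $\|u_k-u\|_\infty\le\|u_{0k}-u_0\|_\infty\to 0$, so the long-time behaviour of $u$ will follow from uniform-in-$k$ information about the long-time behaviour of $u_k$, provided the limit as $t\to+\infty$ and the limit as $k\to\infty$ can be interchanged. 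The key point is that all the constants in sight are controlled: since $H(0)=0$, Corollary \ref{cor1} with $u_2\equiv 0$ gives $\|u_k\|_\infty\le\|u_{0k}\|_\infty\le C$ uniformly.

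For a fixed trigonometric polynomial $u_{0k}$, the reduction in the proof of Theorem \ref{th2} writes $u_{0k}(x)=v_{0k}(\Lambda x)$ with $v_{0k}$ periodic on $\R^m$ (standard lattice $\Z^m$) and $\Lambda\colon\R^n\to\R^m$ linear, and shows that $u_k(t,x)=v_k(t,\Lambda x)$, where $v_k$ is the unique periodic v.s.\ of the reduced equation $v_t+H(\Lambda^*\nabla_y v)=0$. Here $n=1$, so $\Lambda^* q=\sum_j\lambda_j q_j$ for $q\in\R^m$, and the reduced Hamiltonian is $\tilde H(q)=H(\Lambda^* q)$ on $\R^m$. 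The hypothesis that $H$ is not linear in any neighbourhood of $0$ should translate into the non-degeneracy condition needed to apply the multidimensional decay result Theorem \ref{th3} (with $n=m$, $I=0$): indeed $\tilde H(q)-c q\cdot e$ affine near $0$ along a rational direction $e\in\Z^m$ would force $H$ to be affine on an interval around $0$ in the direction $\Lambda^* e\in M_0\subset\R$, i.e.\ on a genuine interval since $n=1$. One must be slightly careful: Theorem \ref{th3} is stated for conservation laws, so I would differentiate, passing to $w_k=\nabla_y v_k$ solving the vector conservation law, or rather invoke the scalar one-dimensional Theorem \ref{thM}/\ref{th4} after a further reduction; alternatively, since for $n=1$ the profile $p$ in Theorem \ref{thM} is constant precisely when $H$ is not linear near $0$, I would argue directly that $v_k(t,\cdot)\rightrightarrows\mathrm{const}_k$.

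The main obstacle is the uniformity of the convergence rate in $k$. Theorem \ref{thM} gives, for each $k$, that $u_k(t,\cdot)\rightrightarrows c_k$ as $t\to+\infty$, but with no a priori rate; naively the interchange of limits fails. The remedy is the monotonicity structure already used in the proof of Theorem \ref{thM}: with $m_k(t)=\inf_x u_k(t,x)$, $M_k(t)=\sup_x u_k(t,x)$ (note that constants are v.s.\ since $H(0)=0$, so by Corollary \ref{cor1} $m_k$ is nondecreasing and $M_k$ nonincreasing), set $m(t)=\inf_x u(t,x)$, $M(t)=\sup_x u(t,x)$; by the uniform estimate $\|u_k-u\|_\infty\le\varepsilon_k\to 0$ we get $|m(t)-m_k(t)|\le\varepsilon_k$ and $|M(t)-M_k(t)|\le\varepsilon_k$ for all $t$, hence $m(t)$ is ``nearly nondecreasing'' and $M(t)$ ``nearly nonincreasing'', so both have limits $m_\infty\ge M_\infty-2\varepsilon_k$ for every $k$, giving $m_\infty\ge M_\infty$; combined with $m_\infty\le M_\infty$ this yields $m_\infty=M_\infty=:c$. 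Finally $m(t)\le u(t,x)\le M(t)$ forces $u(t,\cdot)\rightrightarrows c$. Thus the whole argument hinges on passing the monotonicity of $\min$/$\max$ through the uniform approximation, and on checking that the non-linearity hypothesis on $H$ survives the linear change of variables $\Lambda^*$ into the genuine non-degeneracy needed for the periodic decay theorem; the latter is where I expect the only real subtlety, and it is resolved by the $n=1$ observation that any affine behaviour of $\tilde H$ along a lattice direction descends to affine behaviour of $H$ on an honest interval about $0$.
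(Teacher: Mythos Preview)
Your overall architecture --- reduce to trigonometric polynomial data, lift to a periodic problem on $\T^m$ via $u_k(t,x)=v_k(t,\Lambda x)$, obtain decay there, then pass to the limit in $k$ using the monotonicity of $m(t)=\inf_x u(t,x)$ and $M(t)=\sup_x u(t,x)$ --- matches the paper, and your final interchange-of-limits argument is essentially the same as the paper's Cauchy-sequence-of-constants step. The gap is in the core step: you do not actually establish that $v_k(t,\cdot)\rightrightarrows c_k$ on $\T^m$.

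None of the three routes you sketch works as stated. Taking $w_k=\nabla_y v_k$ gives a \emph{vector} unknown; it does not satisfy a scalar conservation law, so Theorem~\ref{th3} does not apply. Theorem~\ref{thM} is a one-dimensional periodic result and cannot be applied to $v_k$, which lives on $\T^m$; nor can it be applied to $u_k$ itself, since a trigonometric polynomial with incommensurate frequencies is not periodic on $\R$. The correct object is the \emph{scalar} directional derivative $w=D_\Lambda v_k=\Lambda\cdot\nabla_y v_k$ (note that the reduced Hamiltonian $\tilde H(q)=H(\Lambda\cdot q)$ depends on $q$ only through $\Lambda\cdot q$); this $w$ is an e.s.\ of the scalar conservation law $w_t+\div_y(H(w)\Lambda)=0$ on $\T^m$, and your non-degeneracy check (that $\xi\cdot\Lambda\ne 0$ for $\xi\in\Z^m\setminus\{0\}$, by $\Z$-independence of the $\lambda_j$) is exactly what is needed to invoke Theorem~\ref{th3} and obtain $D_\Lambda v_k(t,\cdot)\to 0$ in $L^1(\T^m)$.

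Even with that, a second idea is missing: decay of $D_\Lambda v_k$ only says that any subsequential limit $v_\infty$ (obtained via Arzel\`a--Ascoli, using the Lipschitz bound inherited from the trigonometric polynomial $v_{0k}$) is constant along the line $x\mapsto x\Lambda$ in $\T^m$, not that it is globally constant. The paper closes this by Kronecker's theorem: since the $\lambda_j$ are rationally independent, the orbit $\{x\Lambda\}$ is dense in $\T^m$, so continuity forces $v_\infty\equiv c_k$. Only then does the monotonicity of $\|v_k(t,\cdot)-c_k\|_\infty$ upgrade subsequential to full convergence. Without the directional-derivative reduction and the Kronecker argument, the trigonometric-polynomial step remains open.
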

\begin{proof}
It seems natural to reduce the Hamilton-Jacobi equation to conservation law (\ref{cl1}), like in the proof of Theorem~\ref{thM} above, and to use the decay properties of almost periodic e.s. Unfortunately, the known decay property of almost periodic e.s. of conservation laws (see, for example, \cite{PaJHDE1}) asserts the decay in the Besicovitch norm, which is too weak for our aims. We will follow another approach similar to one used in the proof of Theorem~\ref{th2}, which is based on reduction to the periodic case.

Let us first consider the case when $\displaystyle u_0(x)=\sum_{\lambda\in S}a_\lambda e^{2\pi i\lambda x}$ is a trigonometric polynomial. In this case $S=Sp(u_0)$ is finite and the group $M_0\subset\R$ generated by the spectrum $S$ is a free abelian group of finite rank. Let $\lambda_j\in\R$, $j=1,\ldots,m$ be a basis of $M_0$, $\Lambda=(\lambda_1,\ldots,\lambda_m)\in\R^m$. Then, see the proof of Theorem~\ref{th2}, $u_0(x)=v_0(y(x))$, $u(t,x)=v(t,y(x))$, where $\displaystyle v_0(y)=\sum_{\bar k\in J} a_{\bar k}e^{2\pi i\bar k\cdot y}\in C(\T^m)$ is a periodic trigonometric polynomial on $\R^m$ with the standard lattice of periods $\Z^m$, $J=\{ \bar k=(k_1,\ldots,k_m)\in\Z^m \ | \lambda(\bar k)=\Lambda\cdot\bar k\in S \ \}$, $y(x)=x\Lambda$ is a linear map from $\R$ into $\R^m$, and $v(t,y)\in C([0,+\infty),C(\T^m))$ is a unique v.s. of the problem
\begin{equation}\label{red1}
v_t+H(\Lambda\cdot\nabla_y v)=0, \quad v(0,y)=v_0(y).
\end{equation}

Since $v_0(y)$ is a trigonometric polynomial, it satisfies the Lipschitz condition $|v_0(y+z)-v_0(z)|\le L|z|$ $\forall y,z\in\R^m$, where $L>0$ is a Lipschitz constant. Obviously,
$v(t,y+z)$ is a v.s. of (\ref{red1}) with initial function $v_0(y)$ replaced by $v_0(y+z)$. By Corollary~\ref{cor1}
$|v(t,y+z)-v(t,y)|\le\|v_0(\cdot+z)-v_0\|_\infty\le L|z|$ $\forall {t\ge 0},y,z\in\R^m$, that is, $v(t,y)$ is Lipschitz continuous with respect to the space variables $y$. In particular, $\nabla_y v(t,y)\in L^\infty(\Pi,\R^m)$, and $\|\nabla_y v(t,y)\|_\infty\le L$. By our assumptions $H(0)=0$, which guarantees the estimate $\|v\|_\infty\le\|v_0\|_\infty$.
We see that the family $v(t,\cdot)$, $t>0$ is bounded and equicontinuous in $C(\T^m)$. By the Arzel\`a-Ascoli theorem there exist a sequence $t_r>0$, $r\in\N$ such that
$t_r\to+\infty$ as $r\to\infty$ and a function $v_\infty=v_\infty(y)\in C(\T^m)$ with the property $v(t_r,\cdot)\rightrightarrows v_\infty$ as $r\to\infty$.

Let us apply the directional derivative $D_\Lambda=\Lambda\cdot\nabla$ to equation (\ref{red1}). We obtain
$$
(D_\Lambda v)_t+ D_\Lambda H(D_\Lambda v)=0.
$$
After the change $w=D_\Lambda v$ we arrive at the conservation law
\begin{equation}\label{red2}
w_t+\div_y (H(w)\Lambda) =w_t+\sum_{j=1}^m (\lambda_j H(w))_{y_j}=0
\end{equation}
with the flux vector $f(w)=H(w)\Lambda$, equipped with the corresponding initial condition
\begin{equation}\label{red3}
w(0,y)=w_0(y)\doteq D_\Lambda v_0(y)\in L^\infty(\T^m).
\end{equation}

Using the vanishing viscosity approximations like in \cite{CL}, one can claim that $w=D_\Lambda v(t,y)$ is an e.s. of (\ref{red2}), (\ref{red3}). For the sake of completeness we provide more details. As was shown in \cite{CL}, the v.s. $v(t,y)$ is an uniform limit of the sequence $v_l(t,y)\in C^2(\Pi)$, $l\in\N$, of classical solutions to the parabolic problems
\begin{equation}\label{red4}
v_t+H_l(\Lambda\cdot\nabla_y v)=\varepsilon_l\Delta v, \quad v(0,y)=v_0(y),
\end{equation}
where $0<\varepsilon_l\to 0$ as $l\to\infty$, and $H_l(s)$ is a sufficiently regular approximation of the hamiltonian $H(s)$,
Applying the derivative $D_\Lambda$ to equation (\ref{red4}), we find that the function $w_l(t,y)=D_\Lambda v$ is a classical solution to the problem
$$
w_t+\sum_{j=1}^m (\lambda_j H_l(w))_{y_j}=\varepsilon_l\Delta w, \quad w(0,y)=w_0(y).
$$
In the correspondence with vanishing viscosity method (see, for example, \cite{Kr}), the sequence $w_l(t,y)=D_\Lambda v_l\to w(t,y)$ as $l\to \infty$ in $L^1_{loc}(\Pi)$, where
$w(t,y)$ is an e.s. of (\ref{red2}), (\ref{red3})
As was noticed above, $v_l\rightrightarrows v$ as $l\to\infty$ and we conclude that $D_\lambda v=w$ in $\D'(\Pi)$.

Observe that $w_0(y)=D_\Lambda v_0(y)$ is a periodic function with zero mean value. Further, for each $\xi\in\Z^m$, $\xi\not=0$
$$
\xi\cdot H(w)\Lambda=(\xi\cdot\Lambda)H(w)
$$
and since $\lambda_j$, $j=1,\ldots,m$, is a basis of $M_0$,
$$\xi\cdot\Lambda=\sum_{j=1}^m \xi_j\lambda_j\not=0.$$
Therefore, $\xi\cdot (H(w)\Lambda)=(\xi\cdot\Lambda) H(w)$ is not linear in any vicinity of zero for all $\xi\in\Z^m$, $\xi\not=0$.
By Theorem~\ref{th3} (with $\LL'=\LL=\Z^m$)
$w(t,y)\to 0$ as $t\to\infty$ in $L^1(\T^m)$. Taking $t=t_r$, we find that
\begin{eqnarray*}
w(t_r,y)=D_\Lambda v(t_r,y)\mathop{\to}_{r\to\infty} 0 \ \mbox{ in } L^1(\T^m), \\
v(t_r,y)\mathop{\to}_{r\to\infty} v_\infty(y) \ \mbox{ in } C(\T^m).
\end{eqnarray*}
From these equalities it follows that $D_\Lambda v_\infty(y)=0$ in $\D'(\R^m)$. Taking also into account that
$v_\infty(y)$ is a continuous function we find that $v_\infty(x\Lambda)=c=\const$ $\forall x\in\R$. Since the numbers $\lambda_j$, $j=1,\ldots,m$, are linear independent over the ring $\Z$, then by the Kronecker's theorem (~see \cite{LevZh}~) the curve $y=x\Lambda$ is dense in torus $\T^m$. This, together with continuity of $v_\infty$, implies that $v_\infty\equiv c$. By Corollary~\ref{cor1} we find
$$\|v(t,\cdot)-c\|_\infty\le \|v(t_l,\cdot)-c\|_\infty \quad \forall t>t_l.$$
From this it follows that
$$
\lim_{t\to+\infty}\|v(t,\cdot)-c\|_\infty=\lim_{l\to\infty}\|v(t_l,\cdot)-c\|_\infty=0.
$$
To complete the proof, it only remain to notice that $u(t,x)=v(t,x\Lambda)$.

In general case of arbitrary almost periodic initial function $u_0(x)$
we construct a sequence $u_{0n}$, $n\in\N$, of trigonometric polynomials such that
$u_{0n}\to u_0$ as $n\to\infty$ in $AP(\R)$. Let $u_n=u_n(t,x)$ be a v.s. of (\ref{1d1}), (\ref{1d2}) with initial data $u_{0n}$. Then, as follows from Corollary~\ref{cor1},
\begin{equation}\label{red5}
\|u_n(t,\cdot)-u(t,\cdot)\|_\infty\le\|u_{0n}-u_0\|_\infty\mathop{\to}_{n\to\infty} 0.
\end{equation}
As we have already established, there exist constants $c_n$ such that
\begin{equation}\label{red6}
u_n(t,x)\rightrightarrows c_n \quad \mbox{ as } t\to+\infty.
\end{equation}
It follows from (\ref{red5}) and (\ref{red6}) that for all $m,n\in\N$
$$
|c_n-c_m|\le\|u_n-u_m\|_\infty\to 0 \ \mbox{ as } n,m\to\infty,
$$
that is, $c_n$ is a Cauchy sequence in $\R$. Therefore, $c_n\to c$ as $n\to\infty$, where $c$ is some constant. It now follows from   (\ref{red5}), (\ref{red6}) in the limit as $n\to\infty$ that $u(t,\cdot)\rightrightarrows c$ as $t\to+\infty$. The proof is complete.
\end{proof}

\textbf{Acknowledgments}.
The research was carried out under support of the Russian Foundation for Basic Research (grant no. 15-01-07650-a)  and the Ministry of Education and Science of Russian Federation (project no. 1.445.2016/1.4).

\end{document}